\numberwithin{figure}{section}
\tikzset{
block/.style={
  draw, 
  rectangle, 
  minimum height=1.5cm, 
  minimum width=3cm, align=center
  }, 
line/.style={->,>=latex'}
}
\newtheorem{theorem}{Theorem}[section]
\newtheorem{lemma}[theorem]{Lemma}
\theoremstyle{definition}
\newtheorem{definition}[theorem]{Definition}
\newtheorem{proposition}[theorem]{Proposition}
\theoremstyle{remark}
\newtheorem{remark}[theorem]{Remark}
\numberwithin{equation}{section}
\newcommand{\bbr}{\mathbb R}
\begin{document}

% \title[short text for running head]{full title}
\title[Emergence behaviors of the thermodynamic Cucker-Smale model]{Emergence of a periodically rotating one-point cluster in a thermodynamic Cucker-Smale ensemble}

%    Only \author and \address are required; other information is
%    optional.  Remove any unused author tags.
%    Enter the address for every author, even if some are the same.

%    author one information
% \author[short version for running head]{name for top of paper}
\author[Cho]{Hangjun Cho}
\address[Hangjun Cho]{ Department of Mathematical Sciences and Research Institute of Mathematics, Seoul National University, Seoul 08826, Republic of Korea} 
\email{hj2math@snu.ac.kr}
\author[Du]{Linglong Du}
\address[Linglong Du]{Department of Applied Mathematics,
 Donghua University, Shanghai, P.R. China}
 \email{matdl@dhu.edu.cn}
\thanks{Corresponding author: Linglong Du.}

%    author two information
 %\author{a}
%\address{a}
%\curraddr{a}
%\email{a}
%\thanks{a}
\author[Ha]{Seung-Yeal Ha}
\address[Seung-Yeal Ha]{Department of Mathematical Sciences and Research Institute of Mathematics, Seoul National University, Seoul 08826 and 
Korea Institute for Advanced Study, Hoegiro 85, Seoul, 02455, Republic of Korea} 
\email{syha@snu.ac.kr}
\thanks{The work of L. Du is supported by Natural Science Foundation of China (No.12001097 and 12171082),  and  China Scholarship Council. This work was completed while the second author visited HYKE-Hwarang Lab, Seoul National University. She would like to thank Professor Ha for his hospitality during the academic year of 2020. The work of S.-Y. Ha is supported by National Research Foundation of Korea (NRF-2020R1A2C3A01003881).}

%    \subjclass is required.
\subjclass[2020]{34D05,70F99,74A15} \keywords{Flocking, harmonic oscillator, thermodynamic  Cucker-Smale model, rotating one-point cluster}

\date{}

\dedicatory{This paper has been accepted for Quarterly of Appl. Math. 
by  Professor Constantine Dafermos.}

%    Abstract is required.
\begin{abstract}
We study emergent behaviors of thermomechanical Cucker-Smale (TCS) ensemble confined in a harmonic potential field. In the absence of external force field, emergent dynamics of TCS particles has been extensively studied recently under various frameworks formulated in terms of initial configuration, system parameters and network topologies. Moreover, the TCS model does not exhibit rotating motions in the absence of an external force field. In this paper, we show the emergence of periodically rotating one-point cluster for the TCS model in a harmonic potential field using  elementary energy estimates and continuity argument. We also provide several numerical simulations and compare them with analytical results. 
\end{abstract}

\maketitle

%    Text of article.

%    Bibliographies can be prepared with BibTeX using amsplain,
%    amsalpha, or (for "historical" overviews) natbib style.
\bibliographystyle{amsplain}
%    Insert the bibliography data here.

\section{Introduction} \label{sec:1}
\setcounter{equation}{0}
Collective behaviors of classical and quantum many-body systems are often observed in nature, e.g., aggregation of bacteria, flashing of fireflies, flocking of birds and schooling of fish,  \cite{A-B-P, A-B, B-B, D-M, M-T-0, Pe, P-R-K, T-T, T-B, VZ, Wi1}. While each particle follows their own dynamics, overall dynamics of the whole system is able to exhibit  coherent phenomena in the absence of a leader, and allows  to oder emerge from disordered configurations.  To model the emergent phenomena, several phenomenological models were proposed in literature. Among them, we are mainly interested in the Cucker-Smale (CS) type models \cite{CS1, CS2, D-M-1, H-T} describing flocking phenomena of self-propelled particle systems.  The CS model was first introduced  by Cucker and Smale \cite{CS1,CS2}, and several natural extensions of the CS model were also proposed to incorporate several aspects, to name a few,
interactions with noisy environments \cite{AH,E,HL},  normalized interactions \cite{M-T}, systems in  temperature field \cite{H-R, HKR}, systems in different forcing fields  \cite{C-D,HH,ST}, the interactions with neighboring fluids in a temperature field  \cite{CH1,CH2}. 

In what follows, we are interested in the dynamics of the TCS model in a harmonic potential field. To set the stage, let $\mathbf{x}_{\alpha}, \mathbf{v}_{\alpha}$ and $T_{\alpha}$ be the position,  velocity and temperature of the $\alpha$-th particle with unit mass in $\mathbb{R}^d$, $\alpha =1,2,\dots, n$, respectively. When the ensemble of TCS particles is under the effect of an external potential force, the temporal dynamics of the thermomechanical observables $({\mathbf x}_\alpha, {\mathbf v}_\alpha, T_\alpha)$ is governed by the following system of ordinary differential equations:
\begin{align}
\begin{aligned} \label{TCSPO}
&\frac{d\mathbf{x}_\alpha}{dt} = \mathbf{v}_\alpha, \quad t > 0, \quad \alpha = 1, \cdots, n, \\
&\frac{d\mathbf{v}_\alpha}{dt} = \frac{\kappa_1}{n} \sum_{\beta=1}^{n} \phi_{\alpha \beta} \Big( \frac{\mathbf{v}_\beta - \mathbf{v}_c}{T_\beta} -  \frac{\mathbf{v}_\alpha - \mathbf{v}_c}{T_\alpha} \Big)- \nabla_\mathbf{x} V(\mathbf{x}_\alpha), \\
&\frac{d}{dt}\!\!\left(T_\alpha + {{\frac{1}{2}  {v}_\alpha^2}}\right)\!=\! \frac{\kappa_2}{n}  \sum_{\beta=1}^{n} \zeta_{\alpha \beta} \Big( \frac{1}{T_\alpha} - \frac{1}{T_\beta}   \Big) +  \frac{\kappa_1}{n} \sum_{\beta=1}^{n} \phi_{\alpha \beta} \Big( \frac{\mathbf{v}_\beta - \mathbf{v}_c}{T_\beta} -  \frac{\mathbf{v}_\alpha - \mathbf{v}_c}{T_\alpha} \Big) \cdot \mathbf{v}_c,   
\end{aligned}   
\end{align}
where $V = V(|\mathbf x|)$ is the one-body potential, $ {v}_\alpha^2 := |\mathbf{v}_\alpha|^2$, $\kappa_1$ and $\kappa_2$ denote positive coupling strengths, ${\mathbf v}_c$ is the average velocity defined in \eqref{A-0}.

In this paper, we are interested in the collective behaviors of TCS particles in a harmonic potential field with $V(\mathbf{x}) = |\mathbf{x}|^2/2$. In this situation, system \eqref{TCSPO}  becomes
\begin{align}
\begin{aligned} \label{TCSH}
&\frac{d\mathbf{x}_\alpha}{dt} = \mathbf{v}_\alpha, \quad t > 0, \quad \alpha = 1, \cdots, n, \\
&\frac{d\mathbf{v}_\alpha}{dt} = \frac{\kappa_1}{n} \sum_{\beta=1}^{n} \phi_{\alpha \beta} \Big( \frac{\mathbf{v}_\beta - \mathbf{v}_c}{T_\beta} -  \frac{\mathbf{v}_\alpha - \mathbf{v}_c}{T_\alpha} \Big)-\mathbf{x}_\alpha, \\
&\frac{d}{dt}\!\left(T_\alpha\!+\!{{\frac{1}{2}{v}_\alpha^2}}\right)\!=\!\frac{\kappa_2}{n}  \sum_{\beta=1}^{n} \zeta_{\alpha \beta} \Big( \frac{1}{T_\alpha} - \frac{1}{T_\beta}   \Big) +  \frac{\kappa_1}{n} \sum_{\beta=1}^{n} \phi_{\alpha \beta} \Big( \frac{\mathbf{v}_\beta - \mathbf{v}_c}{T_\beta} -  \frac{\mathbf{v}_\alpha - \mathbf{v}_c}{T_\alpha} \Big) \cdot \mathbf{v}_c.
\end{aligned}   
\end{align}
Here the adjacent matrices $(\phi_{\alpha \beta})$ and $(\zeta_{\alpha \beta})$ in (\ref{TCSH}) are assumed to be dependent on the spatial difference between particles:
\begin{equation} \label{CA}
\phi_{\alpha \beta} = \phi(|\mathbf{x}_\alpha - \mathbf{x}_\beta|) \quad \text{and} \quad \zeta_{\alpha \beta}  = \zeta(|\mathbf{x}_\alpha - \mathbf{x}_\beta|),
\end{equation}
and the corresponding base functions $\phi$ and $\zeta$ are also assumed to satisfy the Lipschitz continuity and monotonicity:
\begin{align}
\begin{aligned} \label{comm}
& \phi \in \mbox{Lip}(\bbr_+; \bbr_+), \quad (\phi(r_2) - \phi(r_1)) (r_2 - r_1) \leq 0, \quad r_1, r_2 \geq 0; \\
& \zeta \in \mbox{Lip}(\bbr_+; \bbr_+), \quad (\zeta(r_2) - \zeta(r_1)) (r_2 - r_1) \leq 0, \quad r_1, r_2 \geq 0.
\end{aligned}
\end{align}

A global well-posedness of \eqref{TCSH} - \eqref{comm} can be guaranteed by the standard Cauchy-Lipschitz theory together with a priori boundedness of observables. In the absence of external force field, the emergent dynamics of  the TCS model has been extensively studied in a series of works \cite{CH1, CH2, D-H-K, H-R, HKR} from various points of views, e.g., a uniform boundedness of spatial variation, exponential decay of velocities and temperatures. These works mostly dealt with sufficient frameworks leading to the mono-cluster flocking in which all particles move with the same constant velocity asymptotically. In this paper, we are interested in the following simple question:
\begin{center}
``{\it What are the dynamical ramifications on the emergent dynamics by incorporating a harmonic potential field?} "
\end{center}
\vspace{0.2cm}
Throughout the paper, we exploit the above question analytically, i.e., we look for a sufficient framework leading to the collective dynamics for system \eqref{TCSH}. For this, we introduce center-of-mass and fluctuations around them:~for a configuration $\{ (\mathbf{x}_{\alpha}, \mathbf{v}_\alpha, T_\alpha)\}$, 
\begin{equation}
\begin{cases} \label{A-0}
 & \displaystyle  \mathbf{x}_c(t) :=\frac{1}{n}\sum_{\alpha = 1}^{n} \mathbf{x}_{\alpha}(t), \quad \mathbf{v}_c(t) :=\frac{1}{n}\sum_{\alpha = 1}^{n} \mathbf{v}_{\alpha}(t), \quad T_c(t) := \frac{1}{n} \sum_{\alpha=1}^{n} T_\alpha(t), \\
 & \displaystyle T^{\infty}(t) := -\frac{1}{2}  |{\mathbf{v}}_c(t)|^2 + T_c(0) +\frac{1}{2n}\sum\limits_{\alpha=1}^n |{\mathbf{v}}_{\alpha}(0)|^2, \\
 & \displaystyle T_m(0) :=-(|\mathbf{v}_c(0)|^2+|\mathbf{x}_c(0)|^2)+T_c(0) +\frac{1}{2n} \sum\limits_{\alpha=1}^n |{\mathbf{v}}_{\alpha}(0)|^2>0, \\
&  \displaystyle T_M(0) :=T_c(0) +\frac{1}{2n} \sum\limits_{\alpha=1}^n |{\mathbf{v}}_{\alpha}(0)|^2>0, \\
 & \displaystyle  \hat{\mathbf{x}}_{\alpha} := \mathbf{x}_\alpha - \mathbf{x}_c , \quad 
\hat{\mathbf{v}}_{\alpha} := \mathbf{v}_\alpha - \mathbf{v}_c , \quad \hat{T}_\alpha := T_\alpha -T^{\infty}, \quad \alpha = 1, \cdots, n. 
 \end{cases}
 \end{equation}
 
Note that $T_m(0)$ and $T_M(0)$ are determined only  by initial data, and in fact, they will provide lower and upper bounds for temperatures.  For the fixed center-of-mass coordinate $({\mathbf x}_c(0), {\mathbf v}_c(0)) = (\mathbf{0}, \mathbf{0})$, one has 
\[ T^{\infty}(t) = T_m(0) = T_M(0), \quad \forall~t \geq 0.    \]
From now on, for the simplicity of notation, we suppress $0$ and set 
\[    T_m := T_m(0), \quad T_M := T_M(0). \]
 Then, it follows from \eqref{A-0} and the explicit formula \eqref{B-0} for ${\mathbf v}_c$ that 
 \begin{equation} \label{A-0-0}
  \sum\limits_{\alpha=1}^n \hat{\mathbf{x}}_{\alpha}(t)=\sum\limits_{\alpha=1}^n \hat{\mathbf{v}}_{\alpha}(t)=\mathbf{0} \quad \mbox{and} \quad T_m \leq T^{\infty}(t) \leq T_M, \quad \forall~t \geq 0.
  \end{equation}
Thanks to the symmetry of communication weights in \eqref{CA},  the averages for position and velocity satisfy the harmonic oscillator equations:
\begin{equation} \label{A-1}
\frac{d\mathbf{x}_c}{dt} =\mathbf{v}_c, \quad  \frac{d\mathbf{v}_c}{dt} =-\mathbf{x}_c.
\end{equation}
It is easy to see that all nontrivial solutions to \eqref{A-1} are closed orbits (see Lemma 2.1). Thus the issue for the collective dynamics is whether the fluctuations $\{ (\hat{\mathbf{x}}_{\alpha}, \hat{\mathbf{v}}_{\alpha},  \hat{T}_\alpha) \}$ vanish asymptotically or not. If so, under what conditions, can we guarantee the asymptotic vanishing of fluctuations  so that the whole particle system behaves like a single particle with a closed trajectory? This is the issue that we would like to address in this paper. \newline

Next, we briefly discuss our main results and  strategy. As noted in earlier works \cite{H-R, HKR}, temperatures appear in the denominators of $\eqref{TCSH}_2 -\eqref{TCSH}_3$,  thus it is necessary to guarantee the positivity of temperatures to construct a global solution satisfying desired emergent estimates. For the derivation of desired emergent estimates, we first assume that initial data satisfy some admissible condition, and temperatures are away from zero at least in some small-time interval (a priori assumption on temperatures below), i.e., for some positive constant $\varepsilon_0$, $\forall~\delta>3$ and $\tau \in (0, \infty]$,
\begin{equation} \label{A-2}
T_m \ge \delta\varepsilon_0, \quad \sup_{0 \leq t < \tau}  |{\hat T}_{\alpha}(t)| \leq \varepsilon_0,
\end{equation}
we derive a dissipative energy estimate for $t\in[0,\tau)$ (see Proposition \ref{P3.1}):
\begin{align}
\begin{aligned} \label{A-3}
&\frac{d}{dt}\sum\limits_{\alpha = 1}^{n}\left(\frac{1}{2}|\hat{\mathbf{x}}_\alpha|^2 + \frac{1}{2}|\hat{\mathbf{v}}_\alpha|^2 + \varepsilon \mathbf{\hat{x}}_\alpha\cdot \mathbf{\hat{v}}_\alpha \right)\\
&\le \Big(-2\lambda \phi(\sqrt{2}\mathcal{X})+\varepsilon\gamma+\frac{\kappa_1\varepsilon_0 \phi(0)}{ (T_m - \varepsilon_0)^2 } \Big) \sum\limits_{\alpha = 1}^{n} | \hat{\mathbf{v}}_\alpha|^2-\frac{\varepsilon}{2}\sum\limits_{\alpha = 1}^{n} | \hat{\mathbf{x}}_\alpha|^2,
\end{aligned}
\end{align}
where $\varepsilon, \lambda$ and $\gamma$ are positive constants, $\mathcal{X}$ is  $\ell^2$-norm of spatial fluctuation defined in \eqref{C-1}. Once we can establish the dissipative differential inequality \eqref{A-3}, we can use  a bootstrap argument together with \eqref{A-3} to derive the desired emergent estimate for mechanical observables (see Proposition \ref{L4.1}):%Proposition \ref{P4.1}):
\begin{equation} \label{A-4}
\sum_{\alpha = 1}^{n} |\mathbf{x}_\alpha(t) - \mathbf{x}_c(t) |^2  +  \sum_{\alpha = 1}^{n} |\mathbf{v}_\alpha(t) - \mathbf{v}_c(t) |^2 \le  {\mathcal O}(1)e^{-\frac{2\varepsilon}{3}t}, \quad t\in[0,\tau).
\end{equation}
On the other hand, for temperature homogenization, we also derive Gr\"onwall's type differential inequality for temperature fluctuations under the same a priori condition \eqref{A-2} (see Proposition \ref{P4.2a}):
\[ \frac{d}{dt} \sum_{\alpha = 1}^{n} |T_\alpha(t) -T^{\infty}(t)|^2 \leq -C_0 \sum_{\alpha = 1}^{n} |T_\alpha(t) -T^{\infty}(t)|^2 + {\mathcal O}(1) e^{-\frac{2\varepsilon}{3}t}, \quad t\in[0,\tau),
\] where $C_0$ is a positive constant.
This again yields an exponential decay of temperature fluctuations (see Proposition \ref{P4.2}): There exists a positive constant $\Lambda > 0$ such that 
\begin{equation} \label{A-5}
\sum_{\alpha = 1}^{n} |T_\alpha(t) -T^{\infty}(t)|^2 \le {\mathcal O}(1) e^{-\Lambda t}, \quad t\in[0,\tau).
\end{equation}
Finally, we show that under suitable assumptions on initial data, the a priori condition \eqref{A-2}  holds for all time $t$ and obtain desired emergent estimates \eqref{A-4} and \eqref{A-5} (see Theorem \ref{T5.1}). We refer to \cite{ST} for related results to the hydrodynamic CS model with an external potential force (see Remark \ref{R5.1}). \newline

The rest of this paper is organized as follows. In Section \ref{sec:2}, we study basic estimates for system \eqref{TCSH} and formulations for fluctuation dynamics. In Section \ref{sec:3}, we derive an energy estimate. In Section \ref{sec:4}, we provide emergence of  periodically rotating one-point cluster  in a small-time interval under a priori condition \eqref{A-2}. In Section \ref{sec:5},  we remove the a priori condition on temperatures by imposing suitable conditions on the initial data and continuity argument, and derive  desired emergent estimates for all time. In Section \ref{sec:6}, we provide several numerical simulations and compare them with analytical results  in Section \ref{sec:5}. Finally, Section \ref{sec:7} is devoted to a brief summary of our main results and some remaining issues for a future work.

\section{Preliminaries} \label{sec:2}
\setcounter{equation}{0}
In this section, we study basic estimates for \eqref{TCSH} such as conservation laws, and then derive a dynamical system for the fluctuations \eqref{A-0}.

\subsection{Basic estimates} \label{sec:2.1} 
For $n$-particle configuration  $\{ (\mathbf{x}_\alpha, \mathbf{v}_\alpha, T_\alpha) \}$, we set 
\begin{equation*} \label{B-0-0}
{\mathcal E} := \sum_{\alpha = 1}^{n}  \Big( T_{\alpha} + \frac{1}{2} |\mathbf{v}_{\alpha}|^2 \Big).
\end{equation*}
As in \cite{H-R}, one has the temporal evolution of the above functional. 
\begin{lemma} \label{L2.1}
\emph{\cite{H-R}}
    For $\tau \in (0, \infty]$, let $\{(\mathbf{x}_{\alpha}, \mathbf{v}_{\alpha}, T_{\alpha})\}$ be a smooth solution in a time-interval $[0, \tau)$ to system \eqref{TCSH} - \eqref{comm} with initial data satisfying
    \[ | {\mathbf x}_c(0) | < \infty, \quad| {\mathbf v}_c(0) | < \infty, \quad {\mathcal E}(0) < \infty.     \]
    Then, the following estimates hold:
    \begin{enumerate}
       \item
Center-of-masses ${\mathbf x}_c$ and ${\mathbf v}_c$ are given by the following explicit formula:
 \begin{equation} \label{B-0}
\mathbf{x}_c(t) = (\cos t) \mathbf{x}_c(0) + (\sin t) \mathbf{v}_c(0), \quad 
\mathbf{v}_c(t) = -(\sin t) \mathbf{x}_c(0) + (\cos t) \mathbf{v}_c(0).
\end{equation}
        \item
        The total energy satisfies 
        \[
        {\mathcal E}(t) = {\mathcal E}(0), \quad   t \in (0, \tau).
        \]
       \end{enumerate}
\end{lemma}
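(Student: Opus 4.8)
The plan is to prove both assertions by summing the governing ODEs over the particle index $\alpha$ and exploiting the symmetry of the communication weights $\phi_{\alpha\beta} = \phi_{\beta\alpha}$ and $\zeta_{\alpha\beta} = \zeta_{\beta\alpha}$ inherited from \eqref{CA}. The single recurring observation I will use repeatedly is that a double sum of the form $\sum_{\alpha,\beta}\phi_{\alpha\beta}(A_\beta - A_\alpha)$ with a symmetric kernel vanishes: relabeling $\alpha \leftrightarrow \beta$ in one half and invoking $\phi_{\alpha\beta}=\phi_{\beta\alpha}$ turns the expression into its own negative.

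For assertion (1), I would first sum $\eqref{TCSH}_1$ over $\alpha$ and divide by $n$ to get $\dot{\mathbf{x}}_c = \mathbf{v}_c$. Next, summing $\eqref{TCSH}_2$ over $\alpha$, the interaction double sum $\sum_{\alpha,\beta}\phi_{\alpha\beta}\big(\frac{\mathbf{v}_\beta-\mathbf{v}_c}{T_\beta}-\frac{\mathbf{v}_\alpha-\mathbf{v}_c}{T_\alpha}\big)$ vanishes by the symmetry argument above, leaving only the potential contribution $-\frac{1}{n}\sum_\alpha \mathbf{x}_\alpha = -\mathbf{x}_c$; hence $\dot{\mathbf{v}}_c = -\mathbf{x}_c$. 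This is exactly the decoupled planar harmonic oscillator \eqref{A-1}. I then solve it directly: differentiating gives $\ddot{\mathbf{x}}_c + \mathbf{x}_c = \mathbf{0}$ componentwise, whose solution with data $(\mathbf{x}_c(0), \mathbf{v}_c(0))$ is $\mathbf{x}_c(t) = (\cos t)\mathbf{x}_c(0) + (\sin t)\mathbf{v}_c(0)$, and $\mathbf{v}_c = \dot{\mathbf{x}}_c$ then yields the second formula in \eqref{B-0}. Equivalently, one checks that the stated pair solves the first-order system and matches the initial data, then appeals to uniqueness.

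For assertion (2), I would differentiate $\mathcal{E} = \sum_\alpha (T_\alpha + \frac{1}{2}|\mathbf{v}_\alpha|^2)$ and substitute $\eqref{TCSH}_3$ termwise; note that the summand $\frac{d}{dt}(T_\alpha + \frac{1}{2}|\mathbf{v}_\alpha|^2)$ is precisely the left-hand side of that balance law, so no separate accounting of the potential work is needed. The temperature-coupling contribution $\frac{\kappa_2}{n}\sum_{\alpha,\beta}\zeta_{\alpha\beta}(\frac{1}{T_\alpha}-\frac{1}{T_\beta})$ vanishes by the same symmetry cancellation, and since $\mathbf{v}_c$ is independent of $\alpha$ it can be pulled out of the velocity-coupling contribution, reducing it to $\frac{\kappa_1}{n}\mathbf{v}_c \cdot \sum_{\alpha,\beta}\phi_{\alpha\beta}(\cdots)$, whose double sum again vanishes. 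Thus $\frac{d}{dt}\mathcal{E} = 0$, and integration over $[0,\tau)$ gives $\mathcal{E}(t) = \mathcal{E}(0)$.

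The computation carries essentially no analytic obstacle; the one point demanding care is the bookkeeping in the index-swap cancellations, in particular pulling the $\alpha$-independent factor $\mathbf{v}_c$ outside the double sum before applying the symmetry argument in assertion (2). The only structural hypothesis genuinely used is the symmetry of the kernels from \eqref{CA}; the positivity of temperatures (needed later for global well-posedness) plays no role here, since we work on the given interval $[0,\tau)$ on which the solution is assumed smooth.
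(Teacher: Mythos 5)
Your proposal is correct and follows essentially the same route as the paper: sum the equations over $\alpha$, cancel the interaction double sums via the index swap $\alpha\leftrightarrow\beta$ together with the symmetry $\phi_{\alpha\beta}=\phi_{\beta\alpha}$, $\zeta_{\alpha\beta}=\zeta_{\beta\alpha}$, and then solve the resulting harmonic oscillator for $(\mathbf{x}_c,\mathbf{v}_c)$. Your explicit remark about pulling the $\alpha$-independent factor $\mathbf{v}_c$ out of the velocity-coupling term in assertion (2) is a detail the paper leaves implicit, but the argument is identical.
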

\begin{proof}     
\noindent (1)  We sum $\eqref{TCSH}_2$ over all $\alpha$ to get 
\begin{equation} \label{B-1}
\frac{d}{dt} \sum_{\alpha=1}^n \mathbf{v}_\alpha = \frac{\kappa_1}{n} \sum_{\alpha, \beta=1}^{n} \phi_{\alpha \beta} \Big( \frac{\mathbf{v}_\beta - \mathbf{v}_c}{T_\beta} -  \frac{\mathbf{v}_\alpha - \mathbf{v}_c}{T_\alpha} \Big)- \sum_{\alpha = 1}^{n} \mathbf{x}_\alpha. 
\end{equation}
Now, we use index exchange transformation $\alpha \longleftrightarrow \beta$ and symmetry of $\phi_{\alpha \beta} = \phi_{\beta \alpha}$ to see
\[
 \frac{\kappa_1}{n} \sum_{\alpha, \beta=1}^{n} \phi_{\alpha \beta} \Big( \frac{\mathbf{v}_\beta - \mathbf{v}_c}{T_\beta} -  \frac{\mathbf{v}_\alpha - \mathbf{v}_c}{T_\alpha} \Big) = 
 - \frac{\kappa_1}{n} \sum_{\alpha, \beta=1}^{n} \phi_{\alpha \beta} \Big( \frac{\mathbf{v}_\beta - \mathbf{v}_c}{T_\beta} -  \frac{\mathbf{v}_\alpha - \mathbf{v}_c}{T_\alpha} \Big).
\]
Thus, one has 
\begin{equation} \label{B-2}
\frac{\kappa_1}{n} \sum_{\alpha, \beta=1}^{n} \phi_{\alpha \beta} \Big( \frac{\mathbf{v}_\beta - \mathbf{v}_c}{T_\beta} -  \frac{\mathbf{v}_\alpha - \mathbf{v}_c}{T_\alpha} \Big)  = \mathbf{0}. 
\end{equation}
By \eqref{B-1}-\eqref{B-2}  and  $\eqref{TCSH}_1$, $({\mathbf x}_c, {\mathbf v}_c)$ satisfies 
\[
\frac{d{\mathbf x}_c}{dt} = {\mathbf v}_c, \quad  \frac{d{\mathbf v}_c}{dt} =-{\mathbf x}_c,
\]
which yields the desired estimates \eqref{B-0}. \newline

\noindent (2)~Again, we take a sum $\eqref{TCSH}_3$ over all $\alpha$ and use the symmetry of $\zeta_{\alpha \beta}$ and $\phi_{\alpha \beta}$ to derive the desired estimate:
\[ \frac{d}{dt} \sum_{\alpha} \left(T_\alpha + {{\frac{1}{2}  {v}_\alpha^2}}\right) = 0.    \]
\end{proof}
\begin{remark} %\label{R2.1}
Note that  the explicit relation  \eqref{B-0} implies
\[ {\mathbf x}_c(0) = \mathbf{0}, \quad {\mathbf v}_c(0)= \mathbf{0}  \quad \Longrightarrow \quad {\mathbf x}_c(t) = \mathbf{0}, \quad {\mathbf v}_c(t)= \mathbf{0}, \quad t \geq 0.  \]
\end{remark}
Next, we study the Gr\"onwall's type lemma to be used later.
\begin{lemma}\label{L2.2}
Let $y: \bbr_+ \to \bbr$ be a nonnegative Lipschitz function satisfying
\[ 
\begin{cases}
\displaystyle y^{\prime} \le-c_1 y +c_2 e^{-c_3 t}, \quad \mbox{a.e.}~t > 0, \\
\displaystyle y(0) = y^0, 
\end{cases}
\]
where $c_1, c_2$ and $c_3$ are positive constants. Then, one has
\[y(t) \leq y^0 e^{-c_1 t}+\frac{c_2}{c_1-c_3}(e^{-c_3t}-e^{-c_1t}), \quad t\ge 0.\]
\end{lemma}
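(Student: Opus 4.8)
The plan is to use the classical integrating-factor technique for linear differential inequalities, which converts the pointwise (a.e.) inequality into an integral estimate via the fundamental theorem of calculus. The target right-hand side is precisely the solution of the associated linear ODE $z' = -c_1 z + c_2 e^{-c_3 t}$ with $z(0) = y^0$, so the assertion is really a comparison result $y \le z$.

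First I would introduce the integrating factor $e^{c_1 t}$ and consider the auxiliary function $w(t) := y(t) e^{c_1 t}$. Since $y$ is Lipschitz on $\bbr_+$ and $e^{c_1 t}$ is smooth, $w$ is locally Lipschitz, hence absolutely continuous, and therefore differentiable almost everywhere with $w' = (y' + c_1 y) e^{c_1 t}$ a.e. Using the hypothesis $y' \le -c_1 y + c_2 e^{-c_3 t}$, I would bound $w'(t) \le c_2 e^{(c_1 - c_3) t}$ for a.e.\ $t > 0$.

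Next I would integrate this a.e.\ inequality from $0$ to $t$. Here the absolute continuity of $w$ is essential: it guarantees $w(t) - w(0) = \int_0^t w'(s)\,ds$, so integrating the bound on $w'$ yields $w(t) \le y^0 + c_2 \int_0^t e^{(c_1 - c_3) s}\,ds$. Assuming $c_1 \ne c_3$ (as the stated formula implicitly requires, its denominator being $c_1 - c_3$), the integral equals $\frac{1}{c_1 - c_3}(e^{(c_1 - c_3) t} - 1)$. Multiplying through by $e^{-c_1 t}$ then reproduces exactly $y(t) \le y^0 e^{-c_1 t} + \frac{c_2}{c_1 - c_3}(e^{-c_3 t} - e^{-c_1 t})$.

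The computation itself is elementary, so the only genuine subtlety — the main obstacle, such as it is — lies in the regularity bookkeeping: $y$ satisfies the inequality only almost everywhere, so I cannot differentiate it classically and must justify integrating the a.e.\ bound on $w'$ through the absolute continuity of $w$. An equivalent framing is a direct comparison argument: let $z(t)$ denote the explicit right-hand side, check $z' = -c_1 z + c_2 e^{-c_3 t}$ with $z(0) = y^0$, and show the difference $u := y - z$ satisfies $u' \le -c_1 u$ a.e.\ with $u(0) = 0$, so that $u e^{c_1 t}$ is nonincreasing and hence $u \le 0$; this rests on the same absolute-continuity fact. One should also flag the degenerate case $c_1 = c_3$, where the integral instead gives $c_2 t\, e^{-c_1 t}$, although the stated formula tacitly excludes it.
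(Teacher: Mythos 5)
Your proof is correct and follows essentially the same route as the paper, which simply invokes Gr\"onwall's inequality and then carries out the identical computation $e^{-c_1 t}\bigl[y^0 + c_2\int_0^t e^{(c_1-c_3)s}\,ds\bigr]$; your integrating-factor argument is just the standard derivation of that inequality written out. The added care about absolute continuity of $w(t)=y(t)e^{c_1 t}$ and the flag of the degenerate case $c_1=c_3$ (which the stated formula tacitly excludes) are sensible refinements but do not change the argument.
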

\begin{proof} By Gr\"onwall's inequality, one has
\begin{align*}
\begin{aligned}
y(t)&\le e^{\int_0^t -c_1ds}\Big[ y^0 +\int_0^t c_2 e^{-c_3 s}e^{-\int_0^s -c_1dr}ds\Big]\\
&\le e^{-c_1 t}\Big[ y^0+ c_2 \int_0^te^{(c_1-c_3)s}ds\Big] \\
&=e^{-c_1 t}\Big[ y^0+\frac{c_2}{c_1-c_3}(e^{(c_1-c_3)t}-1)\Big].
\end{aligned}
\end{align*}
This yields the desired estimate.
\end{proof}

\subsection{A dynamical system for fluctuations} \label{sec:2.2}
In this subsection, we present a dynamical system for fluctuations \eqref{A-0}, which will be used crucially in the following sections. \newline

First, we introduce an emergence of periodically rotating one-point cluster in the following definition. 
\begin{definition} %\label{D2.1}
 Let $\{({\mathbf x}_\alpha, {\mathbf v}_\alpha, T_\alpha) \}$ be a solution to \eqref{TCSH}. Then, the configuration approaches to the  periodically rotating one-point cluster (PROC) asymptotically if the following condition holds:
\[  \lim_{t \to \infty} \max_{\alpha} \Big(|{\mathbf x}_\alpha(t) -{\mathbf x}_c(t) | + |{\mathbf v}_\alpha(t) -{\mathbf v}_c(t) | +  |T_\alpha(t) - T^{\infty}(t) | \Big) = 0.\]
Here $({\mathbf x}_c(t),{\mathbf v}_c(t), T^{\infty}(t))$ is  a periodic state. 
 \end{definition}
Note that fluctuations $\{ (\hat{\mathbf{x}}_{\alpha},\hat{\mathbf{v}}_{\alpha}, \hat{T}_\alpha) \}$ defined in \eqref{A-0}  satisfy
\begin{equation} \label{B-5}
\begin{cases}
\displaystyle \frac{d\hat{\mathbf{x}}_\alpha}{dt} = \hat{\mathbf{v}}_\alpha,  \quad t > 0, \quad \alpha = 1, \cdots, n, \\
\displaystyle \frac{d\hat{\mathbf{v}}_\alpha}{dt} =\frac{\kappa_1}{n} \sum_{\beta=1}^{n} \phi(|\hat{\mathbf{x}}_\alpha - \hat{\mathbf{x}}_\beta|) \Big( \frac{\hat{\mathbf{v}}_\beta}{{\hat T}_\beta + T^{\infty}} -  \frac{\hat{\mathbf{v}}_\alpha }{{\hat T}_\alpha + T^{\infty}} \Big)- \hat{\mathbf{x}}_\alpha, \\
\displaystyle \frac{d}{dt} (\hat T_\alpha+\frac{1}{2}|\hat{\mathbf{v}}_{\alpha}|^2) = \frac{\kappa_2}{n} \sum_{\beta=1}^{n} \zeta(|\hat{\mathbf{x}}_\alpha - \hat{\mathbf{x}}_\beta|)  \Big( \frac{1}{{\hat T}_\alpha + T^{\infty}} - \frac{1}{{\hat T}_\beta + T^{\infty}}\Big) \\
\displaystyle \hspace{3.3cm} +~\mathbf{x}_c\cdot\hat{\mathbf{v}}_{\alpha}+\mathbf{v}_c\cdot\hat{\mathbf{x}}_{\alpha}.
\end{cases}
\end{equation}

Before we close this section, we briefly outline our strategy in three steps, which will be  illustrated in the following three sections separately. \newline
\begin{itemize}
\item
Step A (Derivation of a differential inequality for energy functional under a priori condition \eqref{A-2}): ~We set 
\begin{equation} \label{L} {\mathcal L}(\hat{\mathbf x}, \hat{\mathbf v}) : = \sum\limits_{\alpha = 1}^{n} \left(\frac{1}{2}|\hat{\mathbf{x}}_\alpha|^2 + \frac{1}{2}|\hat{\mathbf{v}}_\alpha|^2 + \varepsilon \mathbf{\hat{x}}_\alpha\cdot \mathbf{\hat{v}}_\alpha\right). 
\end{equation}
Then, it is equivalent to the standard energy functional $\sum\limits_{\alpha = 1}^{n} \Big(\frac{1}{2}|\hat{\mathbf{x}}_\alpha|^2 + \frac{1}{2}|\hat{\mathbf{v}}_\alpha|^2 \Big)$, i.e., there exists a generic positive constant $C$ such that 
\begin{equation*} \label{B-6}
\frac{1}{C} \sum\limits_{\alpha = 1}^{n} \Big(\frac{1}{2}|\hat{\mathbf{x}}_\alpha|^2 + \frac{1}{2}|\hat{\mathbf{v}}_\alpha|^2 \Big) \leq  {\mathcal L}(\hat{\mathbf x},\hat{\mathbf v}) \leq C \sum\limits_{\alpha = 1}^{n} \Big(\frac{1}{2}|\hat{\mathbf{x}}_\alpha|^2 + \frac{1}{2}|\hat{\mathbf{v}}_\alpha|^2 \Big).
\end{equation*}
Under the a priori condition \eqref{A-2}, we derive a differential inequality for ${\mathcal L}$:
\begin{equation} \label{B-7}
\frac{d}{dt} {\mathcal L} (\hat{\mathbf x}, \hat{\mathbf v})  \leq \Big(-2\lambda \phi(\sqrt{2}\mathcal{X})+\varepsilon\gamma+\frac{\kappa_1\varepsilon_0 \phi(0)}{ (T_m - \varepsilon_0)^2 } \Big) \sum\limits_{\alpha = 1}^{n} | \hat{\mathbf{v}}_\alpha|^2-\frac{\varepsilon}{2}\sum\limits_{\alpha = 1}^{n} | \hat{\mathbf{x}}_\alpha|^2.
\end{equation}

\vspace{0.2cm}

\item
Step B (Derivation of emergence for PROC under a priori condition \eqref{A-2}):~First, we use the differential inequality \eqref{B-7} to see that at least in a small-time interval $[0, \tau)$, spatial fluctuations are sufficiently small
\[   \sum_{\alpha = 1}^{n} |\mathbf{x}_\alpha(t) - \mathbf{x}_c(t) |^2 \ll 1, \quad \forall~t \in [0, \tau). \]
By bootstrapping the above rough estimate in the differential inequality \eqref{B-7}, we get an exponential decay estimate \eqref{A-4} for fluctuations. Fluctuations for temperatures can be dealt with analogously by deriving a differential inequality for them, and we obtain the exponential decay to  zero for  temperature fluctuations.
 
\vspace{0.2cm}

\item
Step C (Removal of a priori condition  \eqref{A-2}): ~Finally, we replace the a priori condition for temperatures by admissible conditions for initial data and then use the continuity argument, we show that the maximal time span $\tau$ in Step B can be extended to infinity. Thus, fluctuations for mechanical observables and temperature decay to zero exponentially fast, and then the overall asymptotic dynamics of \eqref{TCSH} is governed by the harmonic oscillator equations whose nontrivial solutions are periodic orbits. 
\end{itemize}
\section{Derivation of a dissipative energy estimate} \label{sec:3}
\setcounter{equation}{0}
In this section, we derive a differential inequality \eqref{B-7} for the energy functional ${\mathcal L}(\hat{\mathbf x}, \hat{\mathbf v})$ under the a priori condition \eqref{A-2}. For this, we perform estimates on the time-evolution of the following thee components in ${\mathcal L}(\hat{\mathbf x}, \hat{\mathbf v})$:
\[  \sum\limits_{\alpha = 1}^{n}\frac{1}{2}|\hat{\mathbf{x}}_\alpha|^2, \quad  \sum_{\alpha =1}^{n} \frac{1}{2} |\hat{\mathbf{v}}_\alpha|^2, \quad   \sum_{\alpha =1}^{n}   \mathbf{\hat{x}}_\alpha\cdot \mathbf{\hat{v}}_\alpha.       \]
We set the time-dependent $\ell^2$-norms of fluctuations \eqref{A-0} as 
\begin{equation} \label{C-1}
{\mathcal X} := \Big( \sum\limits_{\alpha = 1}^{n} |\hat{\mathbf{x}}_\alpha|^2 \Big)^{\frac{1}{2}}, \quad 
{\mathcal V} := \Big( \sum\limits_{\alpha = 1}^{n} |\hat{\mathbf{v}}_\alpha|^2 \Big)^{\frac{1}{2}}, \quad
{\mathcal T} := \Big( \sum\limits_{\alpha = 1}^{n} |\hat{T}_\alpha |^2 \Big)^{\frac{1}{2}}.
%{\mathcal X} := \Big( \sum\limits_{\alpha = 1}^{n} |\mathbf{x}_\alpha - \mathbf{x}_c |^2 \Big)^{\frac{1}{2}}, \quad 
%{\mathcal V} := \Big( \sum\limits_{\alpha = 1}^{n} | \mathbf{v}_\alpha - \mathbf{v}_c|^2 \Big)^{\frac{1}{2}}, \quad
%{\mathcal T} := \Big( \sum\limits_{\alpha = 1}^{n} |T_\alpha -T^{\infty}|^2 \Big)^{\frac{1}{2}}.
 \end{equation}
 
 Next we provide estimates for the component functionals  in  ${\mathcal L}(\hat{\mathbf x}, \hat{\mathbf v})$ one by one.
 
\noindent $\bullet$~Case A.1~(Estimate on $\sum\limits_{\alpha = 1}^{n}\frac{1}{2}|\hat{\mathbf{x}}_\alpha|^2$):~We take an inner product $\mathbf{\hat{x}}_{\alpha}$ with $\eqref{B-5}_1$  and sum the resulting relation over $\alpha$ to get 
  \begin{equation} \label{C-2}
\frac{d}{dt} \left(\sum\limits_{\alpha = 1}^{n}\frac{1}{2}|\hat{\mathbf{x}}_\alpha|^2\right) = \sum\limits_{\alpha = 1}^{n} \mathbf{\hat{x}}_\alpha\cdot \mathbf{\hat{v}}_\alpha.
    \end{equation}

\vspace{0.2cm}

\noindent $\bullet$~Case A.2~(Estimate on $\sum\limits_{\alpha = 1}^{n}\frac{1}{2}|\hat{\mathbf{v}}_\alpha|^2$):~We take an inner product $\hat{\mathbf{v}}_{\alpha}$ with $\eqref{B-5}_2$  and sum up the resulting relation over $\alpha$ to obtain
\begin{align}
\begin{aligned} \label{C-3}
&\frac{d}{dt} \sum_{\alpha =1}^{n}  \frac{|\hat{\mathbf{v}}_\alpha|^2}{2} =  \frac{\kappa_1}{n} \sum\limits_{\alpha, \beta=1}^{n} \phi(|\hat{\mathbf{x}}_\alpha - \hat{\mathbf{x}}_\beta|)\hat{\mathbf{v}}_{\alpha} \cdot \Big( \frac{\hat{\mathbf{v}}_\beta}{{\hat T}_\beta + T^{\infty}} -  \frac{\hat{\mathbf{v}}_\alpha}{{\hat T}_\alpha + T^{\infty}} \Big) -\sum\limits_{\alpha = 1}^{n} \mathbf{\hat{x}}_\alpha\cdot \mathbf{\hat{v}}_\alpha\\
&%\hspace{0.1cm}
 =-\frac{\kappa_1}{n} \sum\limits_{\alpha, \beta=1}^{n} \phi(|\hat{\mathbf{x}}_\alpha - \hat{\mathbf{x}}_\beta|)
\hat{\mathbf{v}}_{\beta} \cdot \Big( \frac{\hat{\mathbf{v}}_\beta}{{\hat T}_\beta + T^{\infty}} -  \frac{\hat{\mathbf{v}}_\alpha}{{\hat T}_\alpha + T^{\infty}} \Big) -\sum\limits_{\alpha = 1}^{n} \mathbf{\hat{x}}_\alpha\cdot \mathbf{\hat{v}}_\alpha\\
&%\hspace{0.1cm} 
=\frac{\kappa_1}{2n} \sum\limits_{\alpha, \beta=1}^{n} \phi(|\hat{\mathbf{x}}_\alpha - \hat{\mathbf{x}}_\beta|) (\hat{\mathbf{v}}_{\alpha} - \hat{\mathbf{v}}_{\beta}) \cdot \Big( \frac{\hat{\mathbf{v}}_\beta}{{\hat T}_\beta + T^{\infty}} -  \frac{\hat{\mathbf{v}}_\alpha}{{\hat T}_\alpha + T^{\infty}} \Big)-\sum\limits_{\alpha = 1}^{n} \mathbf{\hat{x}}_\alpha\cdot \mathbf{\hat{v}}_\alpha \\
&%\hspace{0.1cm} 
= -\frac{\kappa_1}{2n} \sum\limits_{\alpha, \beta=1}^{n} \phi(|\hat{\mathbf{x}}_\alpha - \hat{\mathbf{x}}_\beta|)\frac{1}{ {\hat T}_{\beta} + T^{\infty}  } \
    |\hat{\mathbf{v}}_{\alpha} - \hat{\mathbf{v}}_{\beta}|^2 \\
&\hspace{0.5cm} + \frac{\kappa_1}{2n} \sum\limits_{\alpha, \beta=1}^{n} \phi(|\hat{\mathbf{x}}_\alpha - \hat{\mathbf{x}}_\beta|)
 \frac{{\hat T}_{\alpha} - {\hat T}_{\beta}}{ ({\hat T}_{\alpha} + T^{\infty}) ({\hat T}_{\beta} + T^{\infty})  } \hat{\mathbf{v}}_{\alpha} \cdot(\hat{\mathbf{v}}_{\alpha} - \hat{\mathbf{v}}_{\beta}) -\sum\limits_{\alpha = 1}^{n} \mathbf{\hat{x}}_\alpha\cdot \mathbf{\hat{v}}_\alpha \\
 &%\hspace{0.1cm} 
 =:  {\mathcal I}_{11} + {\mathcal I}_{12}   -\sum\limits_{\alpha = 1}^{n} \mathbf{\hat{x}}_\alpha\cdot \mathbf{\hat{v}}_\alpha,
\end{aligned}
\end{align}
where we used the relation:
\begin{equation} \label{C-4}
 \frac{\mathbf{\hat{{v}}}_\beta}{{\hat T}_\beta + T^{\infty}} -  \frac{\hat{\mathbf{v}}_\alpha}{{\hat T}_\alpha + T^{\infty}} = \frac{1}{{\hat T}_{\beta} + T^{\infty} } 
(\hat{\mathbf{v}}_{\beta} - \hat{\mathbf{v}}_{\alpha}) +\frac{ {\hat T}_{\alpha} - {\hat T}_{\beta}}{ ({\hat T}_{\alpha} + T^{\infty}) ({\hat T}_{\beta} + T^{\infty})  }\hat{\mathbf{v}}_{\alpha}.
\end{equation}
In what follows, we use the following  estimates:
\begin{align}
\begin{aligned} \label{C-5}
& |\hat{\mathbf{x}}_\alpha -\hat{\mathbf{x}}_\beta|\le\sqrt{2(|\hat{\mathbf{x}}_\alpha|^2+|\hat{\mathbf{x}}_\beta|^2 )}\le\sqrt{2} \mathcal{X}, \\
& 0 <T_m- \varepsilon_0 \le T^{\infty}- \varepsilon_0 \leq {\hat T}_{\alpha} + T^{\infty}\leq T^{\infty}+ \varepsilon_0\le T_M+ \varepsilon_0.
\end{aligned}
\end{align}
\noindent $\diamond$~(Estimate of ${\mathcal I}_{11} $): ~We use \eqref{C-5} and \eqref{A-0-0} to obtain
\begin{align}
\begin{aligned} \label{C-7}
{\mathcal I}_{11} &\leq -\frac{\kappa_1}{2n} \sum\limits_{\alpha, \beta=1}^{n} \phi(|\hat{\mathbf{x}}_\alpha - \hat{\mathbf{x}}_\beta|) \frac{1}{ T^{\infty} + \varepsilon_0} |\hat{\mathbf{v}}_{\alpha} - \hat{\mathbf{v}}_{\beta}|^2
\le - \frac{\kappa_1\phi(\sqrt{2}\mathcal{X}) }{ T_M + \varepsilon_0}  \sum\limits_{\alpha=1}^{n} |\hat{\mathbf{v}}_\alpha|^2.
\end{aligned}
\end{align}

\noindent $\diamond$~(Estimate of ${\mathcal I}_{12} $): ~Similarly, one has 
\begin{align}
\begin{aligned} \label{C-8}
{\mathcal I}_{12} &=\frac{\kappa_1}{2n} \sum\limits_{\alpha, \beta=1}^{n} \phi(|\hat{\mathbf{x}}_\alpha - \hat{\mathbf{x}}_\beta|)
\frac{{\hat T}_{\alpha} - {\hat T}_{\beta}}{ ({\hat T}_{\alpha} + T^{\infty}) ({\hat T}_{\beta} + T^{\infty})  }  \hat{\mathbf{v}}_{\beta} \cdot(\hat{\mathbf{v}}_{\alpha} - \hat{\mathbf{v}}_{\beta})\\
&=\frac{\kappa_1}{4n} \sum\limits_{\alpha, \beta=1}^{n} \phi(|\hat{\mathbf{x}}_\alpha - \hat{\mathbf{x}}_\beta|)\
 \frac{{\hat T}_{\alpha} - {\hat T}_{\beta}}{ ({\hat T}_{\alpha} + T^{\infty}) ({\hat T}_{\beta} + T^{\infty})  } \Big( |\hat{\mathbf{v}}_{\alpha}|^2-|\hat{\mathbf{v}}_{\beta}|^2\Big)\\
   &\le \frac{2\varepsilon_0}{4n}\frac{\kappa_1\phi(0)}{ (T^{\infty} - \varepsilon_0)^2 }  \sum\limits_{\alpha, \beta=1}^{n} 
\Big( |\hat{\mathbf{v}}_{\alpha}|^2+ |\hat{\mathbf{v}}_{\beta}|^2\Big) \leq \frac{\kappa_1\varepsilon_0\phi(0)}{ (T_m - \varepsilon_0)^2 }  \sum\limits_{\alpha=1}^{n} |\hat{\mathbf{v}}_\alpha|^2.
\end{aligned}
\end{align}
In \eqref{C-3}, we combine all the estimates \eqref{C-7} and \eqref{C-8} to get %, and use the assumption $\varepsilon_0 \ll T_m$ to get 
\begin{align}
\begin{aligned} \label{C-9}
\frac{d}{dt} \sum_{\alpha =1}^{n} \frac{|\mathbf{\hat{{v}}_\alpha}|^2}{2}&\leq \left(-\frac{\kappa_1\phi(\sqrt{2}{\mathcal{X}}) }{ T_M + \varepsilon_0} +\frac{\kappa_1\varepsilon_0 \phi(0)}{ (T_m - \varepsilon_0)^2 } \right) \sum\limits_{\alpha=1}^{n} |\hat{\mathbf{v}}_\alpha|^2-\sum\limits_{\alpha = 1}^{n} \mathbf{\hat{x}}_\alpha\cdot \mathbf{\hat{v}}_\alpha.
%&\leq -\frac{\kappa_1\phi(\sqrt{2} \mathcal{X}) }{2( T_M + \varepsilon_0)}  \sum\limits_{\alpha=1}^{n} |\hat{\mathbf{v}}_\alpha|^2-\sum\limits_{\alpha = 1}^{n} \mathbf{\hat{x}}_\alpha\cdot \mathbf{\hat{v}}_\alpha.
\end{aligned}
\end{align}

\noindent $\bullet$~Case A.3~(Estimate on $\sum\limits_{\alpha = 1}^{n} \mathbf{\hat{x}}_\alpha\cdot \mathbf{\hat{v}}_\alpha$):~ We use the assumption $T_m \ge \delta\varepsilon_0$ to find
\begin{align}
\begin{aligned} \label{C-10}
&\frac{d}{dt} \left(\sum\limits_{\alpha = 1}^{n} \mathbf{\hat{x}}_\alpha\cdot \mathbf{\hat{v}}_\alpha\right) =\sum\limits_{\alpha = 1}^{n} \left( \frac{d{\hat{\mathbf{x}}}_\alpha}{dt}\cdot \hat{\mathbf{v}} _\alpha+\hat{\mathbf{x}}_\alpha\cdot  \frac{d{\hat{\mathbf{v}} }_\alpha}{dt}\right)\\
&= \sum\limits_{\alpha = 1}^{n}| \hat{\mathbf{v}}_\alpha|^2+  \frac{\kappa_1}{n}\sum_{\alpha,\beta=1} ^n\phi(|\hat{\mathbf{x}}_\alpha - \hat{\mathbf{x}}_\beta|) \hat{\mathbf{x}}_\alpha \cdot \Big( \frac{\hat{\mathbf{v}}_\beta}{T_\beta} -  \frac{\hat{\mathbf{v}}_\alpha}{T_\alpha } \Big) - \sum\limits_{\alpha=1}^{n}|\hat{\mathbf{x}}_{\alpha}|^2 \\ 
  &\le  - \sum\limits_{\alpha = 1}^{n}| \hat{\mathbf{x}}_\alpha|^2+\sum\limits_{\alpha = 1}^{n}| \hat{\mathbf{v}}_\alpha|^2+ \frac{ \kappa_1\phi(0)}{n}\sum\limits_{\alpha,\beta=1}^n \left(\frac{\kappa_1\phi(0)}{2}\left(\frac{\hat{\mathbf{v}}_\beta}{T_\beta} - \frac{\hat{\mathbf{v}}_\alpha}{T_\alpha } \right)^2 + \frac{1}{2\kappa_1\phi(0)} \hat{\mathbf{x}}_\alpha^2\right) \\
&\le  - \frac{1}{2}\sum\limits_{\alpha = 1}^{n}| \hat{\mathbf{x}}_\alpha|^2+ \sum\limits_{\alpha = 1}^{n}| \hat{\mathbf{v}}_\alpha|^2 +(\kappa_1\phi(0))^2 \left(\frac{2}{(T^{\infty} - \varepsilon_0)^2} +  \frac{4\varepsilon_0^2}{(T^{\infty} - \varepsilon_0)^4}\right)\sum\limits_{\alpha = 1}^{n}| \hat{\mathbf{v}}_\alpha|^2 \\
 &= - \frac{1}{2}\sum\limits_{\alpha = 1}^{n}| \hat{\mathbf{x}}_\alpha|^2 +\left ((\kappa_1\phi(0))^2 \left(\frac{2}{(T^{\infty} - \varepsilon_0)^2} +  \frac{4\varepsilon_0^2}{(T^{\infty} - \varepsilon_0)^4}\right)+1\right) \sum\limits_{\alpha = 1}^{n}| \hat{\mathbf{v}}_\alpha|^2\\
 &\le - \frac{1}{2}\sum\limits_{\alpha = 1}^{n}| \hat{\mathbf{x}}_\alpha|^2  +\left ((\kappa_1\phi(0))^2\frac{3}{(T_m - \varepsilon_0)^2} +1\right) \sum\limits_{\alpha = 1}^{n}| \hat{\mathbf{v}}_\alpha|^2,
\end{aligned}
\end{align}
where we used the estimates:
 \begin{align*}
 \begin{aligned}
&\sum\limits_{\alpha,\beta=1}^n \left(\frac{\hat{\mathbf{v}}_\beta}{T_\beta} - \frac{\hat{\mathbf{v}}_\alpha}{T_\alpha } \right)^2 \\
&= \sum\limits_{\alpha,\beta=1}^n \left(\frac{(\hat{\mathbf{v}}_\beta - \hat{\mathbf{v}}_\alpha)}{T_\beta} + \frac{\hat{\mathbf{v}}_\alpha}{T_\alpha  T_\beta} (\hat{T}_\alpha - \hat{T}_\beta)\right)^2 \leq  2\sum\limits_{\alpha,\beta=1}^n \left(\frac{\hat{\mathbf{v}}_\beta^2 +\hat{\mathbf{v}}_\alpha^2}{(T^{\infty} - \varepsilon_0)^2} +  \frac{4\varepsilon_0^2}{(T^{\infty} - \varepsilon_0)^4} \hat{\mathbf{v}}_\alpha^2 \right) \\
 &\le 2\sum\limits_{\alpha = 1}^{n}| \hat{\mathbf{v}}_\alpha|^2 \!\left(\frac{2n}{(T^{\infty}\! -\! \varepsilon_0)^2} + n \frac{4\varepsilon_0^2}{(T^{\infty} \!-\! \varepsilon_0)^4}\right) \!= \!2n \left(\frac{2}{(T^{\infty}\!-\! \varepsilon_0)^2} \!+\!\frac{4\varepsilon_0^2}{(T^{\infty} \!-\! \varepsilon_0)^4}\right) \sum\limits_{\alpha = 1}^{n}| \hat{\mathbf{v}}_\alpha|^2,
\end{aligned}
 \end{align*}
 and
 \[ ab\le \left(\eta a^2+\frac{b^2}{\eta}\right)/2, \quad (a\pm b)^2\le 2(a^2+b^2), \quad \sum\limits_{\alpha=1}^n \hat{\mathbf{v}}_\alpha = \mathbf{0}. \]
Finally, we combine all the estimates \eqref{C-2}, \eqref{C-9} and \eqref{C-10} to obtain a dissipative  differential inequality in next proposition. 
\begin{proposition} \label{P3.1}
 For $\tau \in (0, \infty]$, let
    $\{(\hat{\mathbf{x}}_{\alpha}, \hat{\mathbf{v}}_{\alpha}, \hat{T}_{\alpha}) \}$ be a
    solution to system  \eqref{B-5} with the initial data $(\hat{\mathbf{x}}_{\alpha}(0), \hat{\mathbf{v}}_{\alpha}(0), \hat{T}_{\alpha}(0))$ satisfying a priori condition:
    \begin{equation} \label{C-11}
      T_m \ge \delta\varepsilon_0,   \quad \sup_{0 \leq t < \tau}  |{\hat T}_{\alpha}(t)| \leq \varepsilon_0,
    \end{equation}
    where $\varepsilon_0$ is a positive constant, $\forall\delta>3$. 
    Then  for $t \in (0, \tau)$, $\forall \varepsilon>0$, one has a dissipative inequality:
    \begin{align*}
    \begin{aligned}
    & \frac{d}{dt}  \sum\limits_{\alpha = 1}^{n} \left(\frac{1}{2}|\hat{\mathbf{x}}_\alpha|^2 + \frac{1}{2}|\hat{\mathbf{v}}_\alpha|^2 + \varepsilon \mathbf{\hat{x}}_\alpha\cdot \mathbf{\hat{v}}_\alpha\right)  \\
    &\le -\left(\frac{\kappa_1\phi(\sqrt{2}{\mathcal{X}}) }{ T_M + \varepsilon_0} -\frac{\kappa_1\varepsilon_0 \phi(0)}{ (T_m - \varepsilon_0)^2 } -\varepsilon\left (\frac{3(\kappa_1\phi(0) )^2}{(T_m - \varepsilon_0)^2} +1\right)\right) \sum\limits_{\alpha = 1}^{n}| \hat{\mathbf{v}}_\alpha|^2 - \frac{\varepsilon}{2}\sum\limits_{\alpha=1}^n| \hat{\mathbf{x}}_\alpha|^2 .
    \end{aligned}
    \end{align*}
\end{proposition}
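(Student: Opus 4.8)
The plan is to bound the time derivative of each of the three pieces of $\mathcal{L}(\hat{\mathbf{x}}, \hat{\mathbf{v}})$ separately and then assemble them in the linear combination (A.1) $+$ (A.2) $+\,\varepsilon\cdot$(A.3) that matches the weights in \eqref{L}. The spatial piece is the easy one: taking the inner product of $\eqref{B-5}_1$ with $\hat{\mathbf{x}}_\alpha$ and summing over $\alpha$ yields the exact identity $\frac{d}{dt}\sum_\alpha \tfrac12|\hat{\mathbf{x}}_\alpha|^2 = \sum_\alpha \hat{\mathbf{x}}_\alpha\cdot\hat{\mathbf{v}}_\alpha$ of \eqref{C-2}, with no estimation required.

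For the kinetic piece I would take the inner product of $\eqref{B-5}_2$ with $\hat{\mathbf{v}}_\alpha$, sum over $\alpha$, and symmetrize the coupling double sum through the exchange $\alpha\leftrightarrow\beta$ (using $\phi_{\alpha\beta}=\phi_{\beta\alpha}$), which recasts it as a sum over velocity differences $\hat{\mathbf{v}}_\alpha-\hat{\mathbf{v}}_\beta$. The decisive step is the algebraic splitting \eqref{C-4} of the flux into a mechanical part proportional to $\hat{\mathbf{v}}_\beta-\hat{\mathbf{v}}_\alpha$ and a thermal part proportional to $\hat{T}_\alpha-\hat{T}_\beta$. The mechanical part gives the genuinely dissipative term $\mathcal{I}_{11}\le -\frac{\kappa_1\phi(\sqrt{2}\mathcal{X})}{T_M+\varepsilon_0}\sum_\alpha|\hat{\mathbf{v}}_\alpha|^2$, where I invoke the temperature bounds \eqref{C-5}, the spatial bound $|\hat{\mathbf{x}}_\alpha-\hat{\mathbf{x}}_\beta|\le\sqrt2\,\mathcal{X}$ together with the monotonicity of $\phi$, and the zero-mean identity $\sum_\alpha\hat{\mathbf{v}}_\alpha=\mathbf{0}$ (so that $\sum_{\alpha,\beta}|\hat{\mathbf{v}}_\alpha-\hat{\mathbf{v}}_\beta|^2=2n\sum_\alpha|\hat{\mathbf{v}}_\alpha|^2$). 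The thermal part $\mathcal{I}_{12}$ carries no definite sign; it is the error term, and I control it through the a priori condition \eqref{C-11}, bounding it by $\frac{\kappa_1\varepsilon_0\phi(0)}{(T_m-\varepsilon_0)^2}\sum_\alpha|\hat{\mathbf{v}}_\alpha|^2$ via $|\hat{T}_\alpha|\le\varepsilon_0$, $\phi\le\phi(0)$, and $\hat{T}_\alpha+T^\infty\ge T_m-\varepsilon_0>0$.

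For the cross term I differentiate $\sum_\alpha\hat{\mathbf{x}}_\alpha\cdot\hat{\mathbf{v}}_\alpha$, substitute both equations of \eqref{B-5}, and obtain $\sum_\alpha|\hat{\mathbf{v}}_\alpha|^2-\sum_\alpha|\hat{\mathbf{x}}_\alpha|^2$ plus a coupling contribution. I then apply Young's inequality $ab\le(\eta a^2+\eta^{-1}b^2)/2$ to sacrifice half of the spatial penalty, keeping a reserve $-\tfrac12\sum_\alpha|\hat{\mathbf{x}}_\alpha|^2$ while absorbing the velocity flux, which is bounded just as in the $\mathcal{I}_{12}$ estimate. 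The hypothesis $\delta>3$ enters precisely here: it is equivalent to $2\varepsilon_0\le T_m-\varepsilon_0$, which is exactly what allows the combination $\frac{2}{(T_m-\varepsilon_0)^2}+\frac{4\varepsilon_0^2}{(T_m-\varepsilon_0)^4}$ to be dominated by $\frac{3}{(T_m-\varepsilon_0)^2}$, producing the clean coefficient in \eqref{C-10}.

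Finally I form (A.1) $+$ (A.2) $+\,\varepsilon\cdot$(A.3): the two cross terms $\pm\sum_\alpha\hat{\mathbf{x}}_\alpha\cdot\hat{\mathbf{v}}_\alpha$ generated in \eqref{C-2} and \eqref{C-9} cancel exactly, the reserve $-\tfrac12\sum_\alpha|\hat{\mathbf{x}}_\alpha|^2$ from \eqref{C-10}, weighted by $\varepsilon$, furnishes the stated spatial dissipation $-\frac{\varepsilon}{2}\sum_\alpha|\hat{\mathbf{x}}_\alpha|^2$, and collecting the coefficients of $\sum_\alpha|\hat{\mathbf{v}}_\alpha|^2$ reproduces the bracket in the statement. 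I expect the genuine obstacle to be the $\mathcal{I}_{12}$ estimate: it is the only contribution without an a priori sign, and its control is exactly the point at which the positivity hypothesis $T_m\ge\delta\varepsilon_0$ is indispensable, both for keeping the denominators $\hat{T}_\alpha+T^\infty$ bounded away from zero and for rendering this thermal error subdominant to the mechanical dissipation.
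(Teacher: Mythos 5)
Your proposal is correct and follows essentially the same route as the paper: the same three-way split of ${\mathcal L}$, the same symmetrization and decomposition \eqref{C-4} into a mechanical flux and a thermal error flux, the same use of the a priori temperature bounds for ${\mathcal I}_{11}$ and ${\mathcal I}_{12}$, and the same identification of where $\delta>3$ enters (absorbing $4\varepsilon_0^2/(T_m-\varepsilon_0)^4$ into $1/(T_m-\varepsilon_0)^2$), followed by the linear combination $\eqref{C-2}+\eqref{C-9}+\varepsilon\times\eqref{C-10}$. The only detail to supply is that reaching the stated constant $\kappa_1\varepsilon_0\phi(0)/(T_m-\varepsilon_0)^2$ for ${\mathcal I}_{12}$ (rather than twice it) needs one further index symmetrization converting $\hat{\mathbf{v}}_{\alpha}\cdot(\hat{\mathbf{v}}_{\alpha}-\hat{\mathbf{v}}_{\beta})$ into $\tfrac12\big(|\hat{\mathbf{v}}_{\alpha}|^2-|\hat{\mathbf{v}}_{\beta}|^2\big)$, as the paper does in \eqref{C-8}.
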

\begin{proof}
For a positive constant $\varepsilon$, we take a linear combination:
\[ \eqref{C-2} + \eqref{C-9} + \varepsilon \times \eqref{C-10} \]
to derive the desired estimate.
\end{proof}
\begin{remark} %\label{R3.1}
 Note that the a priori condition \eqref{C-11} on temperature implies the positivity and boundedness of temperatures: since 
\[ T_\alpha(t) = T^{\infty}(t) + T_\alpha(t) - T^{\infty}(t), \]
it follows from \eqref{A-0} and \eqref{A-0-0} that for $t \in [0, \tau)$,
\[ 0 < T_m - \varepsilon_0 \leq T^{\infty} - |{\hat T}_\alpha(t)| \leq T_\alpha(t) \leq T^{\infty}(t) + |{\hat T}_\alpha(t)| \leq T_M + \varepsilon_0 < \infty.  \]

\end{remark}

 \section{Decay estimates of fluctuations in a small-time interval} \label{sec:4}
 \setcounter{equation}{0}
 In this section, we provide estimates  on the convergence towards a periodically rotating one-point cluster for \eqref{TCSH} under a priori condition which can be valid at least in a small-time interval. 
 
 \begin{theorem} \label{T4.1} 
 Suppose parameters $\tau, \lambda, \gamma, \varepsilon$ and $\varepsilon_0$ are strictly positive constants satisfying the relations:
\begin{equation} \label{D-0}
0 <\! \tau <\! \infty, \!\quad  0 \!< \varepsilon\le \frac{1}{2}, \!\quad\phi(3\sqrt{2} \varepsilon_0)\!>\!\max\left\{\frac{\varepsilon+\varepsilon\gamma}{\lambda}, \frac{2(T_M+\varepsilon_0)\phi(0)}{(\delta-1)(T_m- \varepsilon_0)}\right\},\!\quad\!\forall~\delta>\!3,
\end{equation}
where \[\lambda =\frac{\kappa_1}{ 2(T_M + \varepsilon_0)},\quad \gamma=\frac{3(\kappa_1\phi(0))^2}{(T_m - \varepsilon_0)^2} +1, \]
and let $\{(\mathbf{x}_{\alpha}, \mathbf{v}_{\alpha}, T_{\alpha}) \}$ be a
    solution to system  \eqref{TCSH} in the time-interval $[0, \tau)$  such that the following initial data condition and a priori condition hold:
    \begin{equation} \label{D-0-0}
   {\mathcal X}(0)\le\varepsilon_0, \quad  \mathcal{V}(0) \le \varepsilon_0,\quad  T_m \ge \delta\varepsilon_0, 
    \quad \sup_{0 \leq t < \tau}  |T_{\alpha}(t) - T^{\infty}(t)| \leq \varepsilon_0,
    \end{equation}
  for some positive constant $\varepsilon_0$. Then, one has an asymptotic convergence toward a periodically rotating one-point cluster for $t \in [0, \tau)$:
 \begin{align}
 \begin{aligned} \label{D-0-1}
& (i)~\sum\limits_{\alpha = 1}^{n} \Big( |\mathbf{x}_\alpha(t) - \mathbf{x}_c(t) |^2  + | \mathbf{v}_\alpha(t) - \mathbf{v}_c(t) |^2 \Big) \\
& \hspace{1cm} \leq 4  \sum\limits_{\alpha = 1}^{n} \Big( |\mathbf{x}_\alpha(0) - \mathbf{x}_c(0) |^2  + | \mathbf{v}_\alpha(0) - \mathbf{v}_c(0) |^2 \Big) e^{-\frac{2\varepsilon}{3} t}. \\
& (ii)~\sum\limits_{\alpha = 1}^{n} |T_\alpha(t) \!-\!T^{\infty}(t)|^2 \leq \sum\limits_{\alpha = 1}^{n} |T_\alpha(0) \!-\!T^{\infty}(0)|^2  e^{-A_1 t}+\frac{A_2}{A_1\!-\!\frac{2}{3}\varepsilon} \Big(e^{-\frac{2\varepsilon}{3} t}\!-\!e^{-A_1 t} \Big).
 \end{aligned}
 \end{align}
 Here $A_1$ and $A_2$ are defined in \eqref{A1A2}.
\end{theorem}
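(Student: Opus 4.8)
The plan is to prove (i) and (ii) separately: the mechanical estimate (i) follows from Proposition \ref{P3.1} closed by a continuity (bootstrap) argument on the smallness of $\mathcal{X}$, while the temperature estimate (ii) comes from a scalar Gr\"onwall-type inequality fed into Lemma \ref{L2.2}. Throughout I use two standing facts: by Lemma \ref{L2.1}(1) the pair $(\mathbf{x}_c,\mathbf{v}_c)$ is a rigid rotation, so $|\mathbf{x}_c(t)|^2+|\mathbf{v}_c(t)|^2$ is conserved and $|\mathbf{x}_c|,|\mathbf{v}_c|\le R_0:=\sqrt{|\mathbf{x}_c(0)|^2+|\mathbf{v}_c(0)|^2}$ for all $t$; and by the a priori bound $\sup_t|\hat T_\alpha|\le\varepsilon_0$ together with the remark following Proposition \ref{P3.1}, the temperatures obey $0<T_m-\varepsilon_0\le\hat T_\alpha+T^\infty\le T_M+\varepsilon_0$.

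For (i) I would show that $\mathcal{S}:=\{t\in[0,\tau):\mathcal{X}(s)\le 3\varepsilon_0\ \forall s\in[0,t]\}$ equals $[0,\tau)$. On $\mathcal{S}$, monotonicity of $\phi$ gives $\phi(\sqrt2\mathcal{X})\ge\phi(3\sqrt2\varepsilon_0)$, and the two inequalities in \eqref{D-0} combine with $T_m\ge\delta\varepsilon_0$ — which yields $\tfrac{\kappa_1\varepsilon_0\phi(0)}{(T_m-\varepsilon_0)^2}\le\tfrac{\kappa_1\phi(0)}{(\delta-1)(T_m-\varepsilon_0)}$ and so activates the second condition — to force the $|\hat{\mathbf v}_\alpha|^2$-coefficient in Proposition \ref{P3.1} below $-\varepsilon$; explicitly, adding the two conditions gives $2\lambda\phi(3\sqrt2\varepsilon_0)-\tfrac{\kappa_1\varepsilon_0\phi(0)}{(T_m-\varepsilon_0)^2}-\varepsilon\gamma>\varepsilon$. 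Using $0<\varepsilon\le\tfrac12$ in \eqref{L} to obtain the norm-equivalence $\tfrac14(\mathcal{X}^2+\mathcal{V}^2)\le\mathcal{L}\le\tfrac34(\mathcal{X}^2+\mathcal{V}^2)$, the resulting inequality $\tfrac{d}{dt}\mathcal{L}\le-\varepsilon\mathcal{V}^2-\tfrac{\varepsilon}{2}\mathcal{X}^2\le-\tfrac{2\varepsilon}{3}\mathcal{L}$ integrates to $\mathcal{L}(t)\le\mathcal{L}(0)e^{-\frac{2\varepsilon}{3}t}$, which gives the decay in \eqref{D-0-1}(i). Moreover $\mathcal{X}(t)^2\le 4\mathcal{L}(0)\le 3(\mathcal{X}(0)^2+\mathcal{V}(0)^2)\le 6\varepsilon_0^2$, so $\mathcal{X}(t)\le\sqrt6\,\varepsilon_0<3\varepsilon_0$ \emph{strictly}; hence $\mathcal{S}$ is nonempty, closed, and open, so $\mathcal{S}=[0,\tau)$ and the bootstrap closes.

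For (ii) I would derive $\tfrac{d}{dt}\mathcal{T}^2\le-A_1\mathcal{T}^2+A_2e^{-\frac{2\varepsilon}{3}t}$ and apply Lemma \ref{L2.2} with $c_1=A_1$, $c_2=A_2$, $c_3=\tfrac{2\varepsilon}{3}$. Writing $\eqref{B-5}_3$ as an equation for $\tfrac{d\hat T_\alpha}{dt}$, testing against $2\hat T_\alpha$ and summing, the temperature coupling symmetrizes (exchange $\alpha\leftrightarrow\beta$, use $\zeta_{\alpha\beta}=\zeta_{\beta\alpha}$) into $-\tfrac{\kappa_2}{n}\sum_{\alpha,\beta}\zeta(|\hat{\mathbf x}_\alpha-\hat{\mathbf x}_\beta|)\tfrac{(\hat T_\alpha-\hat T_\beta)^2}{(\hat T_\alpha+T^\infty)(\hat T_\beta+T^\infty)}$, bounded on $\mathcal{S}$ by $-\tfrac{\kappa_2\zeta(3\sqrt2\varepsilon_0)}{n(T_M+\varepsilon_0)^2}\sum_{\alpha,\beta}(\hat T_\alpha-\hat T_\beta)^2$; the identity $\sum_{\alpha,\beta}(\hat T_\alpha-\hat T_\beta)^2=2n\mathcal{T}^2-2(\sum_\alpha\hat T_\alpha)^2$ then supplies the dissipative $-A_1\mathcal{T}^2$. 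The remaining contributions — the forcing $2\sum_\alpha\hat T_\alpha(\mathbf{x}_c\cdot\hat{\mathbf v}_\alpha+\mathbf{v}_c\cdot\hat{\mathbf x}_\alpha)$, the kinetic cross term $-2\sum_\alpha\hat T_\alpha\,\hat{\mathbf v}_\alpha\cdot\tfrac{d\hat{\mathbf v}_\alpha}{dt}$ produced by the $\tfrac12|\hat{\mathbf v}_\alpha|^2$ on the left of $\eqref{B-5}_3$, and the correction $2(\sum_\alpha\hat T_\alpha)^2$ — are estimated by Cauchy--Schwarz and Young using $|\hat T_\alpha|\le\varepsilon_0$, $|\mathbf{x}_c|,|\mathbf{v}_c|\le R_0$, $\phi\le\phi(0)$ and the denominator bound $T_m-\varepsilon_0$. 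Each is dominated by $\eta\mathcal{T}^2$ plus a multiple of $\mathcal{X}^2+\mathcal{V}^2$ (or $\mathcal{V}^4$), which by part (i) decays like $e^{-\frac{2\varepsilon}{3}t}$; choosing $\eta$ small absorbs $\eta\mathcal{T}^2$ into the dissipation, fixing $A_1$, and collecting the decaying remainders fixes $A_2$, the constants entering \eqref{D-0-1}.

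I expect the temperature estimate (ii) to be the main obstacle. The dissipation is clean, but the left side of $\eqref{B-5}_3$ carries the kinetic energy $\tfrac12|\hat{\mathbf v}_\alpha|^2$, so differentiating creates the cross term $-2\sum_\alpha\hat T_\alpha\hat{\mathbf v}_\alpha\cdot\tfrac{d\hat{\mathbf v}_\alpha}{dt}$ which, through $\eqref{B-5}_2$, mixes temperatures, velocities and the harmonic restoring force $-\hat{\mathbf x}_\alpha$; controlling it forces one to track cubic quantities in the fluctuations and to invoke $|\hat T_\alpha|\le\varepsilon_0$ to demote them to quadratic, exponentially small sources. A second subtlety is that, unlike $\sum_\alpha\hat{\mathbf x}_\alpha=\sum_\alpha\hat{\mathbf v}_\alpha=\mathbf{0}$, here $\sum_\alpha\hat T_\alpha=-\tfrac12\mathcal{V}^2\neq 0$, so the passage from $\sum_{\alpha,\beta}(\hat T_\alpha-\hat T_\beta)^2$ to $\mathcal{T}^2$ leaves a $(\sum_\alpha\hat T_\alpha)^2=O(\mathcal{V}^4)$ remainder; it is harmless because it decays faster than $e^{-\frac{2\varepsilon}{3}t}$, but it must be carried through the bookkeeping into $A_2$.
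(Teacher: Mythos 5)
Your proposal follows essentially the same route as the paper: part (i) is closed by the same bootstrap on the set where $\mathcal{X}\le 3\varepsilon_0$, using the norm-equivalence of $\mathcal{L}$ from $\varepsilon\le\tfrac12$ and the two conditions in \eqref{D-0} (with $T_m\ge\delta\varepsilon_0$ converting the second into the bound on $\kappa_1\varepsilon_0\phi(0)/(T_m-\varepsilon_0)^2$) to reach $\tfrac{d}{dt}\mathcal{L}\le-\tfrac{2\varepsilon}{3}\mathcal{L}$; part (ii) is the same Gr\"onwall inequality for $\mathcal{T}^2$ obtained by symmetrizing the $\zeta$-coupling, using $\sum_\alpha\hat T_\alpha=-\tfrac12\mathcal{V}^2$ to handle the $(\sum_\alpha\hat T_\alpha)^2$ remainder, absorbing the kinetic cross term and the $\mathbf{x}_c,\mathbf{v}_c$ forcing into $\eta\mathcal{T}^2$ plus exponentially decaying sources, and then invoking Lemma \ref{L2.2}. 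The two subtleties you flag (the $\tfrac12|\hat{\mathbf v}_\alpha|^2$ on the left of $\eqref{B-5}_3$ and the nonvanishing temperature mean) are exactly the points the paper's proof addresses, so the proposal is correct.
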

\begin{proof}
 Since the proof is very lengthy, we split its proof into the following two subsections.
\end{proof}
 
 \subsection{Derivation of mechanical flocking} \label{sec:4.1}  For notational simplicity, we set 
\[ |{\mathcal Z}(t)|^2 :=  |\mathcal{X}(t)|^2 + |\mathcal{V}(t)|^2,  \qquad |{\mathbf z}_c(0)|^2 :=  |\mathbf{v}_c(0)|^2+|\mathbf{x}_c(0)|^2, \quad t \geq 0,  \]
where the functionals ${\mathcal X}, {\mathcal V}, {\mathbf x}_c$ and ${\mathbf v}_c$ are defined in \eqref{C-1} and \eqref{A-0}, respectively. 
\begin{proposition}\label{L4.1}
Suppose the following conditions hold.
\begin{enumerate}
\item
Parameters $\lambda$, $\gamma$, $\varepsilon$, $\varepsilon_0$ and $\forall\delta>3$ are strictly positive constants such that 
\begin{equation} \label{D-1-0}
 0 < \varepsilon\le \frac{1}{2}, \quad \phi(3\sqrt{2} \varepsilon_0)>\max\left\{\frac{\varepsilon+\varepsilon\gamma}{\lambda}, \frac{\kappa_1\phi(0)}{(\delta-1)(T_m- \varepsilon_0)\lambda}\right\},\quad T_m \ge \delta\varepsilon_0.
\end{equation}

\vspace{0.1cm}

\item
Mechanical fluctuations $\{ (\hat{\mathbf{x}}_\alpha, \hat{\mathbf{v}}_\alpha)\}$ satisfy the following differential inequality for $\forall~t\ge 0$:
\begin{equation} \label{D-1}
\begin{cases}
\displaystyle \frac{d}{dt}{\mathcal L}(\hat{\mathbf x}, \hat{\mathbf v})%\sum\limits_{\alpha = 1}^{n}\left(\frac{1}{2}|\hat{\mathbf{x}}_\alpha|^2 + \frac{1}{2}|\hat{\mathbf{v}}_\alpha|^2 + \varepsilon \mathbf{\hat{x}}_\alpha\cdot \mathbf{\hat{v}}_\alpha \right) 
 \le \Big(-2\lambda \phi(\sqrt{2}\mathcal{X})+\varepsilon\gamma+\frac{\kappa_1\varepsilon_0 \phi(0)}{ (T_m - \varepsilon_0)^2 } \Big) \sum\limits_{\alpha = 1}^{n} | \hat{\mathbf{v}}_\alpha|^2-\frac{\varepsilon}{2}\sum\limits_{\alpha = 1}^{n} | \hat{\mathbf{x}}_\alpha|^2, \\%\quad \forall~t\ge 0, \\
\displaystyle \mathcal{X}(0)\le\varepsilon_0, \quad\mathcal{V}(0)\le\varepsilon_0, 
\end{cases}
\end{equation}
where ${\mathcal L}$ is the functional introduced in \eqref{L}.% ${\mathcal X}, {\mathcal V}$ are functionals introduced in \eqref{L} and \eqref{C-1}. 
\end{enumerate}

\vspace{0.2cm}

Then, one has exponential decay of mechanical fluctuation:
\begin{equation*} \label{D-2}   
  |{\mathcal Z}(t)|^2 \le4  |{\mathcal Z}(0)|^2 e^{-\frac{2\varepsilon}{3} t}, \quad \forall~t\ge 0.
\end{equation*}
 \end{proposition}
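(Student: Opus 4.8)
The plan is to run a continuity (bootstrap) argument driven by the observation that the bracketed coefficient of $\sum_\alpha|\hat{\mathbf v}_\alpha|^2$ in \eqref{D-1} becomes strictly negative as soon as the spatial fluctuation $\mathcal X$ is small, since $\phi$ is nonincreasing by \eqref{comm}. First I would record the norm equivalence for the modified energy $\mathcal L$ defined in \eqref{L}: writing $\mathcal L=\tfrac12|\mathcal Z|^2+\varepsilon\sum_\alpha\hat{\mathbf x}_\alpha\cdot\hat{\mathbf v}_\alpha$ and applying Young's inequality $|\varepsilon\sum_\alpha\hat{\mathbf x}_\alpha\cdot\hat{\mathbf v}_\alpha|\le\tfrac{\varepsilon}{2}|\mathcal Z|^2$ together with $\varepsilon\le\tfrac12$, one obtains $\tfrac14|\mathcal Z|^2\le\mathcal L\le\tfrac34|\mathcal Z|^2$. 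The factor $4$ in the conclusion comes precisely from the ratio of these two constants.

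Next I would introduce the maximal time $\mathcal T^{*}:=\sup\{t\ge0:\mathcal X(s)<3\varepsilon_0\ \text{for all}\ s\in[0,t)\}$; by continuity and $\mathcal X(0)\le\varepsilon_0$ we have $\mathcal T^{*}>0$. On $[0,\mathcal T^{*})$ the monotonicity of $\phi$ gives $\phi(\sqrt2\,\mathcal X)\ge\phi(3\sqrt2\,\varepsilon_0)$, and I would then convert the two parameter constraints in \eqref{D-1-0} into control of the two ``bad'' terms: the first, $\lambda\phi(3\sqrt2\,\varepsilon_0)>\varepsilon+\varepsilon\gamma$, controls $\varepsilon\gamma$, while the elementary bound $\varepsilon_0/(T_m-\varepsilon_0)\le1/(\delta-1)$ coming from $T_m\ge\delta\varepsilon_0$, combined with the second constraint, controls $\kappa_1\varepsilon_0\phi(0)/(T_m-\varepsilon_0)^2$ by $\lambda\phi(3\sqrt2\,\varepsilon_0)$. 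Adding these shows the bracket is bounded above by $-\varepsilon$, so \eqref{D-1} reduces to $\tfrac{d}{dt}\mathcal L\le-\varepsilon\mathcal V^2-\tfrac{\varepsilon}{2}\mathcal X^2\le-\tfrac{\varepsilon}{2}|\mathcal Z|^2\le-\tfrac{2\varepsilon}{3}\mathcal L$, where the last step uses $|\mathcal Z|^2\ge\tfrac43\mathcal L$. Grönwall's inequality then yields $\mathcal L(t)\le\mathcal L(0)e^{-2\varepsilon t/3}$ on $[0,\mathcal T^{*})$, and the norm equivalence upgrades this to $|\mathcal Z(t)|^2\le3|\mathcal Z(0)|^2 e^{-2\varepsilon t/3}\le4|\mathcal Z(0)|^2 e^{-2\varepsilon t/3}$.

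Finally I would close the bootstrap. Since $\mathcal X(0),\mathcal V(0)\le\varepsilon_0$ force $|\mathcal Z(0)|^2\le2\varepsilon_0^2$, the decay bound gives $\mathcal X(t)^2\le|\mathcal Z(t)|^2\le4|\mathcal Z(0)|^2\le8\varepsilon_0^2$, i.e. $\mathcal X(t)\le2\sqrt2\,\varepsilon_0<3\varepsilon_0$ on $[0,\mathcal T^{*})$. If $\mathcal T^{*}$ were finite, continuity would give $\mathcal X(\mathcal T^{*})\le2\sqrt2\,\varepsilon_0<3\varepsilon_0$, contradicting the maximality in the definition of $\mathcal T^{*}$; hence $\mathcal T^{*}=\infty$ and the stated estimate holds for all $t\ge0$.

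The main obstacle is the self-consistency of this loop: one must check that the two constraints in \eqref{D-1-0} are exactly strong enough to pin the dissipation coefficient below $-\varepsilon$, and at the same time that the resulting a posteriori bound $\mathcal X\le2\sqrt2\,\varepsilon_0$ lies \emph{strictly} beneath the threshold $3\varepsilon_0$ used to define $\mathcal T^{*}$. If either margin failed, the bootstrap would not close; the role of the constant $3$ (and of the requirement $\delta>3$) is precisely to furnish this slack. The remaining manipulations---Young's inequality, the monotonicity step, and the Grönwall estimate---are routine.
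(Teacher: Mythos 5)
Your argument is correct and follows essentially the same route as the paper: the same norm equivalence for $\mathcal L$ under $\varepsilon\le\tfrac12$, the same continuity/bootstrap set with threshold $3\varepsilon_0$, the same use of the two constraints in \eqref{D-1-0} (via $\varepsilon_0/(T_m-\varepsilon_0)\le 1/(\delta-1)$) to force the velocity coefficient below $-\varepsilon$, and the same Gr\"onwall step yielding $\mathcal X\le 2\sqrt2\,\varepsilon_0<3\varepsilon_0$ to close the loop. The only cosmetic difference is that your equivalence constants ($\tfrac14$ and $\tfrac34$, ratio $3$) are slightly sharper than the paper's ($\tfrac{3}{16}$ and $\tfrac34$, ratio $4$), so the stated factor $4$ is not literally ``the ratio of your two constants,'' but your bound $3|\mathcal Z(0)|^2\le 4|\mathcal Z(0)|^2$ still gives the claimed estimate.
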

 \begin{proof}
 For the proof, we will use a bootstrapping argument.  Note that we cannot use Gr\"onwall's type Lemma \ref{L2.2} for \eqref{D-1} directly, because the coefficient $-2\lambda \phi(\sqrt{2}\mathcal{X})+\varepsilon\gamma+\frac{\kappa_1\varepsilon_0 \phi(0)}{ (T_m - \varepsilon_0)^2 }$ involves with ${\mathcal X}$ which can be  calculated from the solution itself. So we first derive a rough bound estimate for ${\mathcal X}$, and then using this rough bound, we convert the differential inequality \eqref{D-1} into a standard Gr\"onwall's inequality, which yields an exponential decay estimate. \newline

\noindent $\bullet$~Step A (A rough bound estimate for ${\mathcal X}$):  We first show 
    \begin{equation} \label{ubd}
     \sup_{0 \leq t < \infty} \mathcal{X}(t) \leq 3 \varepsilon_0. 
        \end{equation}
 For this, we define a set ${\mathcal S}$: 
 \[ {\mathcal S}:=\{t>0: \mathcal{X}(s) <3\varepsilon_0\quad\mbox{for $s \in(0,t)$}\}.  \]
Since $\mathcal{X}(0)\le\varepsilon_0$, there exists a positive constant $\delta$ such that
\[  \mathcal{X}(s) <3\varepsilon_0, \quad \mbox{for}~s \in [0, \delta). \]
Hence $\delta \in {\mathcal S}$. Now, we set 
\[ \tau^*:=\sup {\mathcal S}. \]
Once we can show $\tau^* = \infty$, we are done. Suppose not, then we have,
\begin{equation} \label{D-4}
\mathcal{X}(t)< 3\varepsilon_0~\mbox{ for $t\in(0, \tau^*)$} \quad \mbox{and} \quad \mathcal{X}(\tau^*)=3\varepsilon_0.
\end{equation}
By the choice of  $\varepsilon\le 1/2$, the functional ${\mathcal L}(\hat{\mathbf x}, \hat{\mathbf v}):=\sum\limits_{\alpha = 1}^{n} (\frac{1}{2} |\hat{\mathbf{x}}_\alpha|^2+ \frac{1}{2}  | \hat{\mathbf{v}}_\alpha|^2 + \varepsilon \hat{\mathbf{x}}_\alpha\cdot  \hat{\mathbf{v}}_\alpha)$ is equivalent to the square of the standard $\ell^2$-norms $ \sum\limits_{\alpha = 1}^{n} (| \hat{\mathbf{x}}_\alpha|^2 + | \hat{\mathbf{v}}_\alpha|^2)$. More precisely, one has 
    \begin{equation} \label{D-4-0}
\frac{3}{16}\sum\limits_{\alpha = 1}^{n}(  | \hat{\mathbf{x}}_\alpha|^2 +  | \hat{\mathbf{v}}_\alpha|^2)\le {\mathcal L}(\hat{\mathbf x}, \hat{\mathbf v})\le \frac{3}{4}\sum\limits_{\alpha = 1}^{n} ( | \hat{\mathbf{x}}_\alpha|^2 + | \hat{\mathbf{v}}_\alpha|^2).
    \end{equation}
 On the other hand, for $t\in(0, \tau^*)$, it follows from \eqref{D-1-0} that
\begin{equation*} 
 -\lambda \phi(3\sqrt{2}\varepsilon_0)+\varepsilon\gamma<-\varepsilon,   \quad-\lambda \phi(3\sqrt{2}\varepsilon_0)+\frac{\kappa_1\varepsilon_0 \phi(0)}{ (T_m - \varepsilon_0)^2} \le 0.
 \end{equation*} 
These and \eqref{D-1} imply
 \begin{align*}
 \begin{aligned}
&\frac{d}{dt}{\mathcal L}(\hat{\mathbf x}, \hat{\mathbf v})
 \le \Big(-\lambda \phi(\sqrt{2}\mathcal{X})+\varepsilon\gamma\Big) \sum\limits_{\alpha = 1}^{n} | \hat{\mathbf{v}}_\alpha|^2-\frac{ \varepsilon}{2}\sum\limits_{\alpha = 1}^{n} | \hat{\mathbf{x}}_\alpha|^2 \le-\frac{ 2\varepsilon}{3}{\mathcal L}(\hat{\mathbf x}, \hat{\mathbf v}).
\end{aligned}
 \end{align*}
 One can  apply Lemma \ref{L2.2} to have
\begin{equation} \label{D-4-1}
 {\mathcal L}(\hat{\mathbf x}, \hat{\mathbf v})(t)%\sum\limits_{\alpha = 1}^{n} \left(\frac{1}{2} | \hat{\mathbf{x}}_\alpha|^2+ \frac{1}{2}  | \hat{\mathbf{v}}_\alpha|^2 + \varepsilon\mathbf{\hat{x}}_\alpha\cdot \mathbf{\hat{v}}_\alpha \right) 
\leq e^{-\frac{2\varepsilon}{3} t}{\mathcal L}(\hat{\mathbf x}, \hat{\mathbf v})(0).% \sum\limits_{\alpha = 1}^{n} \left(\frac{1}{2} | \hat{\mathbf{x}}_\alpha(0)|^2+ \frac{1}{2}  | \hat{\mathbf{v}}_\alpha(0)|^2 + \varepsilon\mathbf{\hat{x}}_\alpha(0) \cdot \mathbf{\hat{v}}_\alpha(0) \right). 
\end{equation}
We combine \eqref{D-4-0} and \eqref{D-4-1} to get 

\begin{equation} \label{D-5-00}
\mathcal{X}^2(t) +\mathcal{V}^2(t) \leq 4 e^{-\frac{2\varepsilon}{3} t}  \Big( |\mathcal{X}(0)|^2  + |\mathcal{V}(0)|^2 \Big)\quad \mbox{for $t\in(0,\tau^*)$}.
\end{equation}
It follows from $\eqref{D-1}_2$ that
\begin{equation*} \label{D-5}
\mathcal{X}(t)\le2\sqrt{2}\varepsilon_0e^{-\frac{\varepsilon}{3}t}\quad \mbox{for $t\in(0,\tau^*)$}.
\end{equation*}
In particular, one has 
\[ {\mathcal X}(\tau^*) \leq 2\sqrt{2}\varepsilon_0 < 3 \varepsilon_0, \]
which is contradictory to \eqref{D-4}. Hence $\tau^* = \infty$.
  \newline
  
\noindent $\bullet$~Step B (Exponential decay estimate for  mechanical fluctuation): ~We use a rough upper bound \eqref{ubd} and repeat the same argument in Step A to get \eqref{D-5-00} for all $t$. 
 
% \begin{equation} \label{D-1}
%\begin{cases}
%\displaystyle \frac{d}{dt}{\mathcal L}(\hat{\mathbf x}, \hat{\mathbf v}) \le \Big(-\lambda \phi(3\sqrt{2}\varepsilon_0)+\varepsilon\gamma\Big) \sum\limits_{\alpha = 1}^{n} | \hat{\mathbf{v}}_\alpha|^2-\frac{\varepsilon}{2}\sum\limits_{\alpha = 1}^{n} | \hat{\mathbf{x}}_\alpha|^2, \quad \forall~t\ge 0, \\
%\displaystyle \mathcal{X}(0)\le\varepsilon_0, \quad\mathcal{V}(0)\le\varepsilon_0, 
%\end{cases}
%\end{equation}
\end{proof}
\begin{remark} \label{rk4.1}
\noindent 1. Note that the differential inequality \eqref{D-1} has been derived in Proposition \ref{P3.1} under the a priori condition \eqref{A-2}. Thus, if we can guarantee that the a priori condition 
\eqref{A-2} is valid in a whole time interval $[0, \infty)$, we will have a desired estimate on the asymptotic formation of a  periodically rotating one-point cluster. \newline

\noindent 2. If we set
\[\lambda =\frac{\kappa_1}{ 2(T_M + \varepsilon_0)},\quad \gamma=\frac{3(\kappa_1\phi(0))^2}{(T_m - \varepsilon_0)^2} +1,\]
then Proposition \ref{L4.1} implies 
\begin{equation} \label{D-5-0}
\mathcal{X}(t)<3\varepsilon_0,\quad   |{\mathcal Z}(t)|^2 \le 4|{\mathcal Z}(0)|^2 e^{-\frac{2\varepsilon}{3} t}\quad \mbox{for $t\in(0, \tau)$}, 
\end{equation}
where we used Lemma \ref{L2.1}, Lemma \ref{L2.2} and Proposition \ref{P3.1} under the assumption \eqref{A-2}.
\end{remark}

\vspace{0.5cm}

 \subsection{Derivation of temperature homogenization} \label{sec:4.2} In this subsection, we study temperature homogenization. As in the mechanical flocking, we first derive a Gr\"onwall's inequality for ${\mathcal T}^2$ and then using Gr\"onwall's type lemma,  we obtain desired temperature estimate. \newline

Note that the equation for ${\hat T}_{\alpha}$ in \eqref{B-5}  can be rewritten as follows:
\begin{align*}
\begin{aligned}
\frac{d{\hat T}_\alpha}{dt} =& -\frac{\kappa_1}{n}\sum_{\beta=1}^{n} \phi(|\hat{\mathbf{x}}_\alpha - \hat{\mathbf{x}}_\beta|)\hat{\mathbf{v}}_\alpha \cdot  \Big( \frac{\hat{\mathbf{v}}_\beta}{{\hat T}_\beta + T^{\infty}} -  \frac{\hat{\mathbf{v}}_\alpha }{{\hat T}_\alpha + T^{\infty}} \Big) \\
&+ \frac{\kappa_2}{n} \sum_{\beta=1}^{n} \zeta(|\hat{\mathbf{x}}_\alpha - \hat{\mathbf{x}}_\beta|)\Big( \frac{1}{{\hat T}_\alpha + T^{\infty}} - \frac{1}{{\hat T}_\beta + T^{\infty}}   \Big)  +\mathbf{\hat{x}}_\alpha \cdot\mathbf{\hat{v}}_\alpha+\mathbf{v}_c\cdot\mathbf{\hat{x}}_\alpha+\mathbf{x}_c\cdot\mathbf{\hat{v}}_\alpha.
\end{aligned}
\end{align*}
We multiply the above equation by ${\hat T}_{\alpha}$ and then sum the  resulting relation over $\alpha$ to obtain
\begin{align}
\begin{aligned} \label{D-6}
\frac{d}{dt} \sum\limits_{\alpha=1}^{n} \frac{{\hat T}^2_\alpha}{2} = &-\frac{\kappa_1}{n} \sum\limits_{\alpha, \beta=1}^{n} \phi(|\hat{\mathbf{x}}_\alpha - \hat{\mathbf{x}}_\beta|) {\hat T}_{\alpha} \hat{\mathbf{v}}_\alpha \cdot \Big( \frac{\hat{\mathbf{v}}_\beta}{{\hat T}_\beta + T^{\infty}} -
    \frac{\hat{\mathbf{v}}_\alpha}{{\hat T}_\alpha + T^{\infty}} \Big) \\
&+\frac{\kappa_2}{n} \sum\limits_{\alpha, \beta=1}^{n} \zeta(|\hat{\mathbf{x}}_\alpha - \hat{\mathbf{x}}_\beta|){\hat T}_{\alpha} \Big( \frac{1}{{\hat T}_\alpha + T^{\infty}} - \frac{1}{{\hat T}_\beta + T^{\infty}}  \Big)  \\
& + \sum\limits_{\alpha=1}^n {\hat T}_\alpha\mathbf{\hat{x}}_\alpha \cdot\mathbf{\hat{v}}_\alpha+\mathbf{v}_c\cdot\sum\limits_{\alpha=1}^n {\hat T}_\alpha\mathbf{\hat{x}}_\alpha+\mathbf{x}_c\cdot\sum\limits_{\alpha=1}^n {\hat T}_\alpha\mathbf{\hat{v}}_\alpha \\
=:& {\mathcal I}_{21} + {\mathcal I}_{22} + {\mathcal I}_{23} .
\end{aligned}
\end{align}

Now we derive a  Gr\"onwall's type inequality for $ \mathcal T^2$.
\begin{proposition}\label{P4.2a}
Suppose the  conditions  \eqref{D-0} and \eqref{D-0-0} hold, and let
$\{ (\hat{\mathbf{x}}_{\alpha}, \hat{\mathbf{v}}_{\alpha}, \hat{T}_{\alpha}) \}$ be a solution to system \eqref{B-5} in the time interval $[0,\tau)$. %.initial data $(\hat{\mathbf{x}}_{\alpha}(0), \hat{\mathbf{v}}_{\alpha}(0), \hat{T}_{\alpha}(0))$ satisfying
   % \[
%\mathcal{X}(0)\le\varepsilon_0, \quad\mathcal{V}(0)\le\varepsilon_0,   \quad T_m \ge \beta\varepsilon_0,
  % \quad \sup_{0 \leq t < \tau}  |{\hat T}_{\alpha}(t)| \leq \varepsilon_0,
   % \] for some poisitive constant $\varepsilon_0$. 
           Then  %for $\varepsilon$ satisfying \eqref{D-0}, 
           we  have
  \begin{align*}
    \begin{aligned}% \label{D-7}
\frac{d{\mathcal T}^2}{dt} &\leq~2\Big(-\frac{\kappa_2\zeta(3\sqrt{2} \varepsilon_0)}{(T_M+ \varepsilon_0)^2}+\frac{\kappa_1\phi(0)}{2(T_m- \varepsilon_0)}+\frac{1}{2} +\frac{8\kappa_1\phi(0)}{(T_m- \varepsilon_0)^2} |{\mathcal Z}(0)|^2 e^{-\frac{2\varepsilon}{3} t}+ \sqrt{2}  |{\mathbf z}_c(0)| \Big) {\mathcal T}^2\\
&\ \ \ \ \ +~2\frac{\phi(0)\kappa_1}{T_m- \varepsilon_0} {\mathcal V}^4+\frac{\kappa_2\zeta(3\sqrt{2} \varepsilon_0)}{2n(T_M+ \varepsilon_0)^2} {\mathcal V}^4+ {\mathcal X}^2 {\mathcal V}^2+  \sqrt{2}  |{\mathbf z}_c(0)| \cdot |{\mathcal Z}|^2, \quad t\in[0,\tau).
  \end{aligned}
    \end{align*}
\end{proposition}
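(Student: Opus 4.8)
The plan is to work from the decomposition \eqref{D-6}, in which $\frac{1}{2}\frac{d}{dt}{\mathcal T}^2={\mathcal I}_{21}+{\mathcal I}_{22}+{\mathcal I}_{23}$, to estimate the three pieces separately, and then to multiply by $2$. Throughout I will use the a priori bounds in \eqref{C-5} (so $\hat T_\alpha+T^{\infty}\in[T_m-\varepsilon_0,\,T_M+\varepsilon_0]$), the rough spatial bound ${\mathcal X}\le 3\varepsilon_0$ and the mechanical decay $|{\mathcal Z}(t)|^2\le 4|{\mathcal Z}(0)|^2 e^{-\frac{2\varepsilon}{3}t}$ already furnished by Proposition \ref{L4.1} and Remark \ref{rk4.1} under the same hypotheses, together with the conservation $|{\mathbf x}_c(t)|^2+|{\mathbf v}_c(t)|^2=|{\mathbf z}_c(0)|^2$ from Lemma \ref{L2.1}, which yields $|{\mathbf v}_c|,\,|{\mathbf x}_c|\le|{\mathbf z}_c(0)|$.

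The genuinely dissipative piece is ${\mathcal I}_{22}$. I first symmetrize it in $\alpha\leftrightarrow\beta$ using $\zeta_{\alpha\beta}=\zeta_{\beta\alpha}$, which turns it into $-\frac{\kappa_2}{2n}\sum_{\alpha,\beta}\zeta_{\alpha\beta}\,\frac{(\hat T_\alpha-\hat T_\beta)^2}{(\hat T_\alpha+T^{\infty})(\hat T_\beta+T^{\infty})}$. Lower bounding the coefficient by monotonicity of $\zeta$ (so $\zeta_{\alpha\beta}\ge\zeta(3\sqrt2\varepsilon_0)$, since $|\hat{\mathbf x}_\alpha-\hat{\mathbf x}_\beta|\le\sqrt2\,{\mathcal X}\le 3\sqrt2\varepsilon_0$) and by $(\hat T_\alpha+T^{\infty})(\hat T_\beta+T^{\infty})\le(T_M+\varepsilon_0)^2$ gives ${\mathcal I}_{22}\le-\frac{\kappa_2\zeta(3\sqrt2\varepsilon_0)}{2n(T_M+\varepsilon_0)^2}\sum_{\alpha,\beta}(\hat T_\alpha-\hat T_\beta)^2$. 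The essential algebraic input is the identity $\sum_\alpha\hat T_\alpha=-\frac12{\mathcal V}^2$, which follows from the total-energy conservation of Lemma \ref{L2.1} together with the definition of $T^{\infty}$ and $\sum_\alpha|{\mathbf v}_\alpha|^2={\mathcal V}^2+n|{\mathbf v}_c|^2$; it converts $\sum_{\alpha,\beta}(\hat T_\alpha-\hat T_\beta)^2=2n{\mathcal T}^2-2\big(\sum_\alpha\hat T_\alpha\big)^2=2n{\mathcal T}^2-\frac12{\mathcal V}^4$, producing exactly the claimed dissipation $-\frac{\kappa_2\zeta(3\sqrt2\varepsilon_0)}{(T_M+\varepsilon_0)^2}{\mathcal T}^2$ together with the benign correction $+\frac{\kappa_2\zeta(3\sqrt2\varepsilon_0)}{4n(T_M+\varepsilon_0)^2}{\mathcal V}^4$. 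For the lower-order piece ${\mathcal I}_{23}$ I bound $\sum_\alpha\hat T_\alpha\,\hat{\mathbf x}_\alpha\cdot\hat{\mathbf v}_\alpha$ by Young's inequality and the elementary estimate $\sum_\alpha|\hat{\mathbf x}_\alpha|^2|\hat{\mathbf v}_\alpha|^2\le{\mathcal X}^2{\mathcal V}^2$, giving $\frac12{\mathcal T}^2+\frac12{\mathcal X}^2{\mathcal V}^2$, while the two center-of-mass terms are controlled by Cauchy–Schwarz ($|{\mathbf v}_c\cdot\sum_\alpha\hat T_\alpha\hat{\mathbf x}_\alpha|\le|{\mathbf v}_c|\,{\mathcal T}{\mathcal X}$, and similarly for the ${\mathbf x}_c$ term), Young's inequality and the bounds $|{\mathbf v}_c|,|{\mathbf x}_c|\le|{\mathbf z}_c(0)|$; these yield the $|{\mathbf z}_c(0)|$-weighted ${\mathcal T}^2$ and $|{\mathcal Z}|^2$ terms.

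The main obstacle is the cross term ${\mathcal I}_{21}$. Here I split the bracket by the algebraic identity \eqref{C-4} into a velocity-difference part and a temperature-difference part. The velocity-difference part is estimated with $\phi_{\alpha\beta}\le\phi(0)$, the lower bound $\hat T_\beta+T^{\infty}\ge T_m-\varepsilon_0$, Cauchy–Schwarz (using $\sum_\alpha|\hat T_\alpha||\hat{\mathbf v}_\alpha|^2\le{\mathcal T}{\mathcal V}^2$) and Young's inequality ${\mathcal T}{\mathcal V}^2\le\frac12({\mathcal T}^2+{\mathcal V}^4)$, which accounts for the non-decaying ${\mathcal T}^2$ coefficient $\frac{\kappa_1\phi(0)}{2(T_m-\varepsilon_0)}$ and the ${\mathcal V}^4$ contribution $\frac{\kappa_1\phi(0)}{T_m-\varepsilon_0}{\mathcal V}^4$. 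The temperature-difference part carries an extra factor $(\hat T_\alpha+T^{\infty})^{-1}(\hat T_\beta+T^{\infty})^{-1}\le(T_m-\varepsilon_0)^{-2}$ and a factor $|\hat T_\alpha-\hat T_\beta|$; estimating $|\hat{\mathbf v}_\alpha|^2\le{\mathcal V}^2$ and using $\sum_{\alpha,\beta}|\hat T_\alpha|\big(|\hat T_\alpha|+|\hat T_\beta|\big)\le 2n{\mathcal T}^2$ produces $\frac{2\kappa_1\phi(0)}{(T_m-\varepsilon_0)^2}{\mathcal V}^2{\mathcal T}^2$, and it is precisely here that I insert the mechanical decay ${\mathcal V}^2\le|{\mathcal Z}|^2\le 4|{\mathcal Z}(0)|^2 e^{-\frac{2\varepsilon}{3}t}$ to obtain the exponentially small coefficient $\frac{8\kappa_1\phi(0)}{(T_m-\varepsilon_0)^2}|{\mathcal Z}(0)|^2 e^{-\frac{2\varepsilon}{3}t}$ multiplying ${\mathcal T}^2$. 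The delicate point is to route each product so that the mechanical flocking estimate is applied only to the $\hat{\mathbf v}$-factors and never to ${\mathcal T}$, since ${\mathcal T}$ is exactly the quantity whose Grönwall inequality is being derived; the bookkeeping of the two summation indices (keeping the $\frac1n$ prefactor against the double sum) is what must be done carefully. Collecting $2({\mathcal I}_{21}+{\mathcal I}_{22}+{\mathcal I}_{23})$ then yields the asserted differential inequality.
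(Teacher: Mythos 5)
Your proposal follows essentially the same route as the paper's own proof: the identical decomposition \eqref{D-6} into ${\mathcal I}_{21}+{\mathcal I}_{22}+{\mathcal I}_{23}$, the same splitting of ${\mathcal I}_{21}$ via \eqref{C-4}, the same symmetrization and energy-conservation identity $\sum_\alpha {\hat T}_\alpha=-\tfrac12{\mathcal V}^2$ (hence $\sum_{\alpha,\beta}({\hat T}_\alpha-{\hat T}_\beta)^2=2n{\mathcal T}^2-\tfrac12{\mathcal V}^4$) for ${\mathcal I}_{22}$, and the same insertion of the mechanical decay of Proposition \ref{L4.1} only into the velocity factors of the temperature-difference part of ${\mathcal I}_{21}$. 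The sole points of (harmless) divergence are that you invoke the conserved quantity $|\mathbf{x}_c|^2+|\mathbf{v}_c|^2$ where the paper uses the cruder $|\mathbf{x}_c(t)|^2\le 2|{\mathbf z}_c(0)|^2$, and that your global Cauchy--Schwarz treatment of ${\mathcal I}_{211}$ requires a \emph{weighted} Young inequality (e.g.\ $\sqrt{2}\,{\mathcal T}{\mathcal V}^2\le\tfrac12{\mathcal T}^2+{\mathcal V}^4$) rather than the unweighted one you wrote, in order to land on the asymmetric pair of coefficients $\tfrac{\kappa_1\phi(0)}{2(T_m-\varepsilon_0)}$ and $\tfrac{\kappa_1\phi(0)}{T_m-\varepsilon_0}$; the paper obtains the same asymmetry by applying Young termwise inside the double sum, where $\sum_{\alpha,\beta}{\hat T}_\alpha^2=n{\mathcal T}^2$ gains a factor $1/n$ against the prefactor.
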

\begin{proof} In \eqref{D-6}, we provide estimates for ${\mathcal I}_{2i}$ separately. 

\vspace{0.5cm}

\noindent $\bullet$ (Estimate for ${\mathcal I}_{21}$):~We use \eqref{C-4} to obtain
\begin{align}
\begin{aligned} \label{D-8}
 {\mathcal I}_{21} = &-\frac{\kappa_1}{n}\sum\limits_{\alpha, \beta=1}^{n} \phi(|\hat{\mathbf{x}}_\alpha - \hat{\mathbf{x}}_\beta|) {\hat T}_{\alpha} \hat{\mathbf{v}}_\alpha \cdot  \Big( \frac{\hat{\mathbf{v}}_\beta}{{\hat T}_\beta + T^{\infty}} -
    \frac{\hat{\mathbf{v}}_\alpha}{{\hat T}_\alpha + T^{\infty}} \Big) \\
    =&  -\frac{\kappa_1}{n} \sum\limits_{\alpha, \beta=1}^{n} \phi(|\hat{\mathbf{x}}_\alpha - \hat{\mathbf{x}}_\beta|)\frac{{\hat T}_{\alpha}}{{\hat T}_{\beta} + T^{\infty}}  \hat{\mathbf{v}}_\alpha \cdot (\hat{\mathbf{v}}_{\beta} - \hat{\mathbf{v}}_{\alpha})  \\
    &- \frac{\kappa_1}{n} \sum\limits_{\alpha, \beta=1}^{n} \phi(|\hat{\mathbf{x}}_\alpha - \hat{\mathbf{x}}_\beta|)
\frac{|\hat{\mathbf{v}}_{\alpha}|^2}{ ({\hat T}_{\alpha} + T^{\infty}) ({\hat T}_{\beta} + T^{\infty})  }  {\hat T}_{\alpha}
    ({\hat T}_{\alpha} - {\hat T}_{\beta}) \\
    =:&~{\mathcal I}_{211} + {\mathcal I}_{212}.
\end{aligned}
\end{align}
%\vspace{0.2cm}
\noindent $\diamond$  (Estimate for ${\mathcal I}_{211}$):~By direct calculation, one has
\begin{align}
\begin{aligned} \label{D-9}
    {\mathcal I}_{211}  &\le \frac{1}{2n}\frac{\kappa_1\phi(0)}{(T_{m} - \varepsilon_0)}\sum\limits_{\alpha, \beta=1}^{n} ({\hat T}^2_{\alpha}+ |\hat{\mathbf{v}}_{\alpha}|^2
|\hat{\mathbf{v}}_{\beta} - \hat{\mathbf{v}}_{\alpha}|^2)\\
&\le \frac{1}{2}\frac{\kappa_1\phi(0)}{(T_m - \varepsilon_0)} {\mathcal T}^2+\frac{\kappa_1\phi(0)}{(T_m - \varepsilon_0)} {\mathcal V}^4.
 \end{aligned}
 \end{align}

\noindent $\diamond$ (Estimate for ${\mathcal I}_{212}$): ~We use the  short-time estimate \eqref{D-5-0} %and the relation $\sum\limits_{\alpha=1}^n |{\hat T}_{\alpha}|\leq \sqrt{n} \mathcal{T}$
 to find
\begin{align}
\begin{aligned} \label{D-10}
    {\mathcal I}_{212} &:= -\frac{\kappa_1}{n} \sum\limits_{\alpha, \beta=1}^{n} \phi(|\hat{\mathbf{x}}_\alpha - \hat{\mathbf{x}}_\beta|)
\frac{|\hat{\mathbf{v}}_{\alpha}|^2}{ ({\hat T}_{\alpha} + T^{\infty}) ({\hat T}_{\beta} + T^{\infty})  }   {\hat T}_{\alpha}
    ({\hat T}_{\alpha} - {\hat T}_{\beta}) \cr
    &\leq \!4\frac{\phi(0)\kappa_1}{n(T^{\infty}\!-\!\varepsilon_0)^2}   |{\mathcal Z}(0)|^2  e^{-\frac{2\varepsilon}{3} t}\!\!\!\sum_{\alpha, \beta=1}^{n}\!({\hat T}_{\alpha}^2+{\hat T}_{\alpha}{\hat T}_{\beta}) \leq \frac{8\phi(0)\kappa_1 |{\mathcal Z}(0)|^2 }{(T_m\!-\! \varepsilon_0)^2}  e^{-\frac{2\varepsilon}{3} t}{\mathcal T}^2.
    \end{aligned}
     \end{align}
\noindent $\bullet$  (Estimate for ${\mathcal I}_{22}$): ~We use the relation $\mathcal{X}(t)< 3 \varepsilon_0$ in \eqref{D-5-0} to see
\begin{align}
\begin{aligned} \label{D-11}
    {\mathcal I}_{22} =&  \frac{\kappa_2}{n}\sum\limits_{\alpha, \beta=1}^{n} \zeta(|\hat{\mathbf{x}}_\alpha - \hat{\mathbf{x}}_\beta|) {\hat T}_{\alpha} \Big( \frac{1}{{\hat T}_\alpha + T^{\infty}} - \frac{1}{{\hat T}_\beta + T^{\infty}}   \Big) \cr
    =& -\frac{\kappa_2}{n}\sum\limits_{\alpha, \beta=1}^{n} \zeta(|\hat{\mathbf{x}}_\alpha - \hat{\mathbf{x}}_\beta|) {\hat T}_{\beta} \Big( \frac{1}{{\hat T}_\alpha + T^{\infty}} - \frac{1}{{\hat T}_\beta + T^{\infty}}   \Big) \cr
    =& \frac{\kappa_2}{2n} \sum\limits_{\alpha, \beta=1}^{n} \zeta(|\hat{\mathbf{x}}_\alpha - \hat{\mathbf{x}}_\beta|) ({\hat T}_{\alpha} - {\hat T}_{\beta}) \Big( \frac{1}{{\hat T}_\alpha + T^{\infty}} - \frac{1}{{\hat T}_\beta + T^{\infty}}   \Big) \cr
    =&  -\frac{\kappa_2}{2n} \sum\limits_{\alpha, \beta=1}^{n} \zeta(|\hat{\mathbf{x}}_\alpha - \hat{\mathbf{x}}_\beta|) \frac{
        ({\hat T}_{\alpha} - {\hat T}_{\beta})^2}{({\hat T}_\alpha + T^{\infty})({\hat T}_\beta + T^{\infty})} \cr
    \leq& -\frac{\kappa_2\zeta(\sqrt{2}{\mathcal X})}{2n(T_{M}+ \varepsilon_0)^2}  \sum\limits_{\alpha, \beta=1}^{n} ({\hat T}_{\alpha} - {\hat T}_{\beta})^2\cr
     %\leq& -\frac{\kappa_2\zeta(\sqrt{2}{\hat{\mathcal X}})}{(T_M+ \varepsilon_0)^2} {\hat{\mathcal T}}^2+\frac{\kappa_2\zeta(\sqrt{2}{\hat{\mathcal X}})}{4n(T_M+ \varepsilon_0)^2}{\hat{\mathcal V}}^4\\
     \leq& -\frac{\kappa_2\zeta(3\sqrt{2}\varepsilon_0)}{(T_M+ \varepsilon_0)^2} {\mathcal T}^2+\frac{\kappa_2\zeta(3\sqrt{2}\varepsilon_0)}{4n(T_M+ \varepsilon_0)^2} {\mathcal V}^4,
\end{aligned}
\end{align}
where we used the relation:% (\ref{con2}).
\begin{align}
\begin{aligned} \label{D-11-0}
\sum\limits_{\alpha,\beta=1}^n({\hat T}_\alpha-{\hat T}_\beta)^2 &= \sum\limits_{\alpha,\beta=1}^n| (T_\alpha- T^{\infty})-(T_\beta-T^{\infty})|^2 \cr
&= 2n  \sum\limits_{\alpha=1}^n |T_\alpha - T^{\infty}|^2  - 2 \Big( \sum\limits_{\alpha=1}^n (T_\alpha - T^{\infty}) \Big)^2 = 2n {\mathcal T}^2-\frac{1}{2} {\mathcal V}^4. 
\end{aligned}
\end{align}
Note that we used the energy conservation law given by Lemma \ref{L2.1} in \eqref{D-11-0}:
\[\sum\limits_{\alpha=1}^n (T_\alpha + \frac{1}{2} |\mathbf{v}_\alpha|^2) = \sum\limits_{\alpha=1}^n (T_\alpha(0) + \frac{1}{2} |\mathbf{v}_\alpha(0)|^2) = n T^{\infty}+\frac{n}{2}|\mathbf{v}_c|^2,\]
which is equivalent to
\[\sum\limits_{\alpha=1}^n (T_\alpha - T^{\infty}) = -\frac{1}{2} \sum\limits_{\alpha=1}^n (|\mathbf{v}_\alpha|^2 -|\mathbf{v}_c|^2)= -\frac{1}{2} \sum\limits_{\alpha=1}^n (\hat{\mathbf{v}}_\alpha+2\mathbf{v}_c)\cdot\hat{\mathbf{v}}_\alpha= -\frac{1}{2} {\mathcal V}^2.\]
\noindent $\bullet$  (Estimate for ${\mathcal I}_{23}$): ~We use \eqref{A-0} to get
%The cross term $\sum\limits_{\alpha=1}^n {\hat T}_\alpha\mathbf{\hat{x}}_\alpha\cdot\mathbf{\hat{v}}_\alpha+\mathbf{v}_c\cdot\sum\limits_{\alpha=1}^n {\hat T}_\alpha\mathbf{\hat{x}}_\alpha+\mathbf{x}_c\cdot\sum\limits_{\alpha=1}^n {\hat T}_\alpha\mathbf{\hat{v}}_\alpha$  can be estimated with the help of \eqref{A-0}:
\begin{align}
\begin{aligned} \label{D-12}
{\mathcal I}_{23}=&\sum\limits_{\alpha=1}^n {\hat T}_\alpha\mathbf{\hat{x}}_\alpha\cdot\mathbf{\hat{v}}_\alpha+\mathbf{v}_c\cdot\sum\limits_{\alpha=1}^n {\hat T}_\alpha\mathbf{\hat{x}}_\alpha+\mathbf{x}_c\cdot\sum\limits_{\alpha=1}^n {\hat T}_\alpha\mathbf{\hat{v}}_\alpha\\
\leq& \frac{1}{2} {\mathcal T}^2+ \frac{1}{2}\sum\limits_{\alpha=1}^n|\hat{\mathbf{x}}_\alpha|^2|\hat{\mathbf{v}}_\alpha|^2+|\mathbf{v}_c| \Big( \frac{1}{2} {\mathcal T}^2+ \frac{1}{2} {\mathcal X}^2 \Big)+|\mathbf{x}_c| \Big( \frac{1}{2} {\mathcal T}^2+ \frac{1}{2} {\mathcal V}^2 \Big)\\
 \leq& \frac{1}{2} {\mathcal T}^2+ \frac{1}{2} {\mathcal X}^2 {\mathcal V}^2 +  \sqrt{2}  |{\mathbf z}_c(0)|  \Big({\mathcal T}^2+\frac{1}{2}{\mathcal V}^2+\frac{1}{2} {\mathcal X}^2 \Big),
\end{aligned}
\end{align}
since $|\mathbf{x}_c(t)|^2\le 2|{\mathbf z}_c(0)|^2$ and $|\mathbf{v}_c(t)|^2\le 2|{\mathbf z}_c(0)|^2$.\newline

In \eqref{D-6}, we collect all the estimates \eqref{D-8}, \eqref{D-9}, \eqref{D-10}, \eqref{D-11} and \eqref{D-12} to obtain
\begin{align*}
\frac{d{\mathcal T}^2}{dt} \leq& 2\Big(-\frac{\kappa_2\zeta(3\sqrt{2} \varepsilon_0)}{(T_M+ \varepsilon_0)^2}+\frac{\kappa_1\phi(0)}{2(T_m- \varepsilon_0)}+\frac{1}{2} +\frac{8\kappa_1\phi(0)}{(T_m- \varepsilon_0)^2} |{\mathcal Z}(0)|^2 e^{-\frac{2\varepsilon}{3} t}+ \sqrt{2}  |{\mathbf z}_c(0)| \Big) {\mathcal T}^2\\
&+2\frac{\phi(0)\kappa_1}{T_m- \varepsilon_0} {\mathcal V}^4+\frac{\kappa_2\zeta(3\sqrt{2} \varepsilon_0)}{2n(T_M+ \varepsilon_0)^2} {\mathcal V}^4+ {\mathcal X}^2 {\mathcal V}^2+  \sqrt{2}  |{\mathbf z}_c(0)| \cdot |{\mathcal Z}|^2.
 \end{align*}
\end{proof}
For a given $\varepsilon_0 > 0$, we set $A_i,~i=1,2$ as follows.
\begin{align}
\begin{aligned} \label{A1A2}
A_1 &:=\frac{2\kappa_2\zeta(3\sqrt{2} \varepsilon_0)}{(T_M+ \varepsilon_0)^2}-\frac{\kappa_1\phi(0)}{T_m- \varepsilon_0}-1 -\frac{16\kappa_1\phi(0)}{(T_m- \varepsilon_0)^2} |{\mathcal Z}(0)|^2 - 2 \sqrt{2}  |{\mathbf z}_c(0)|  > 0, \\
A_2 &:= 16\left(\frac{2\phi(0)\kappa_1}{T_m- \varepsilon_0}+\frac{\kappa_2\zeta(3\sqrt{2} \varepsilon_0)}{2n(T_M+ \varepsilon_0)^2}+1\right) |{\mathcal Z}(0)|^4 + 
 4 \sqrt{2}  |{\mathbf z}_c(0)|  |{\mathcal Z}(0)|^2> 0.
\end{aligned}
\end{align}
Note that whenever $\kappa_2$ becomes larger, $A_2$ also becomes larger, but $\frac{A_2}{|A_1-\frac{2}{3}\varepsilon|}$ is bounded in the following context. So from now on we assume that $A_1>0$.

Now we show that the temperature fluctuation functional ${\mathcal T}^2$ decays  exponentially at least in a short-time interval.
\begin{proposition}\label{P4.2}
Suppose the  conditions  \eqref{D-0} and \eqref{D-0-0} hold, let
    $\{ (\hat{\mathbf{x}}_{\alpha}, \hat{\mathbf{v}}_{\alpha}, \hat{T}_{\alpha}) \}$ be a
    solution to system \eqref{B-5} for  $\tau \in (0, \infty]$. Then, we have temperature homogenization: 
    \begin{equation*} \label{D-14}
        \mathcal{T}^2(t)\le \mathcal{T}^2(0)e^{-A_1 t}+\frac{A_2}{A_1-\frac{2}{3}\varepsilon} \Big(e^{-\frac{2\varepsilon}{3} t}-e^{-A_1 t} \Big), \quad t\in(0, \tau),
    \end{equation*}
    where $\varepsilon$ satisfies \eqref{D-0}.
\end{proposition}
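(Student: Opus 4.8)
The plan is to collapse the differential inequality of Proposition \ref{P4.2a} into the scalar template $y' \le -c_1 y + c_2 e^{-c_3 t}$ handled by Lemma \ref{L2.2}, with the identification $y = \mathcal{T}^2$, $c_1 = A_1$, $c_2 = A_2$ and $c_3 = \tfrac{2\varepsilon}{3}$. Two reductions are needed before Lemma \ref{L2.2} applies: the coefficient multiplying $\mathcal{T}^2$ on the right-hand side must be turned into the constant $-A_1$, and the collection of forcing terms (the two $\mathcal{V}^4$ contributions, the $\mathcal{X}^2\mathcal{V}^2$ term, and the $|\mathcal{Z}|^2$ term) must be shown to be dominated by $A_2 e^{-\frac{2\varepsilon}{3}t}$.

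First I would treat the $\mathcal{T}^2$-coefficient. The only time-dependent contribution inside it is $\frac{8\kappa_1\phi(0)}{(T_m-\varepsilon_0)^2}|\mathcal{Z}(0)|^2 e^{-\frac{2\varepsilon}{3}t}$, which I bound from above simply by dropping the exponential via $e^{-\frac{2\varepsilon}{3}t}\le 1$. Comparing the resulting constant coefficient with the definition \eqref{A1A2}, one sees it is exactly $-A_1$, so the inequality reduces to $\frac{d\mathcal{T}^2}{dt} \le -A_1 \mathcal{T}^2 + (\text{forcing})$. Next I would estimate the forcing using the mechanical-flocking bound $|\mathcal{Z}(t)|^2 \le 4|\mathcal{Z}(0)|^2 e^{-\frac{2\varepsilon}{3}t}$ supplied by Proposition \ref{L4.1} (valid on $[0,\tau)$ under \eqref{D-0}--\eqref{D-0-0}, cf. \eqref{D-5-0}). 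Since $\mathcal{V}^2 \le |\mathcal{Z}|^2$ and $\mathcal{X}^2\mathcal{V}^2 \le |\mathcal{Z}|^4$, the quartic terms obey $\mathcal{V}^4,\ \mathcal{X}^2\mathcal{V}^2 \le 16|\mathcal{Z}(0)|^4 e^{-\frac{4\varepsilon}{3}t}$, while the quadratic term obeys $|\mathcal{Z}|^2 \le 4|\mathcal{Z}(0)|^2 e^{-\frac{2\varepsilon}{3}t}$. Using $e^{-\frac{4\varepsilon}{3}t}\le e^{-\frac{2\varepsilon}{3}t}$ to put all four contributions on the common envelope $e^{-\frac{2\varepsilon}{3}t}$ and summing their prefactors reproduces precisely the constant $A_2$ of \eqref{A1A2}.

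With both reductions in place the differential inequality becomes $\frac{d\mathcal{T}^2}{dt} \le -A_1 \mathcal{T}^2 + A_2 e^{-\frac{2\varepsilon}{3}t}$, and a direct application of Lemma \ref{L2.2} with $c_1=A_1$, $c_2=A_2$, $c_3=\tfrac{2\varepsilon}{3}$ and $y^0=\mathcal{T}^2(0)$ yields the asserted estimate. I expect the main obstacle to be purely bookkeeping: matching the summed forcing prefactors with $A_2$ term by term, and verifying that both reductions are consistent with the standing hypothesis $A_1>0$ (together with $A_1\neq\tfrac{2\varepsilon}{3}$, which is what makes the division in Lemma \ref{L2.2} legitimate and is ensured for $\kappa_2$ large by the remark following \eqref{A1A2}). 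Once these constants are checked, the remainder is a single substitution into Lemma \ref{L2.2}.
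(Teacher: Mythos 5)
Your proposal is correct and follows essentially the same route as the paper: the authors likewise drop the factor $e^{-\frac{2\varepsilon}{3}t}$ in the $\mathcal{T}^2$-coefficient to recover $-A_1$, bound the forcing terms via the mechanical decay $|\mathcal{Z}(t)|^2\le 4|\mathcal{Z}(0)|^2e^{-\frac{2\varepsilon}{3}t}$ together with $e^{-\frac{4\varepsilon}{3}t}\le e^{-\frac{2\varepsilon}{3}t}$ to produce $A_2e^{-\frac{2\varepsilon}{3}t}$, and then invoke Lemma \ref{L2.2} with $c_1=A_1$, $c_2=A_2$, $c_3=\frac{2\varepsilon}{3}$. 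The constant bookkeeping you anticipate matches \eqref{A1A2} term by term, so nothing further is needed.
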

\begin{proof}
 It follows from Proposition \ref{P4.2a}  and Remark \ref{rk4.1}  that 
\begin{align}
\begin{aligned}\label{D-15}
     &\frac{d{\mathcal T}^2}{dt}\!\! \leq 2\Big(\frac{-\kappa_2\zeta(3\sqrt{2} \varepsilon_0)}{(T_M+ \varepsilon_0)^2}\!+\!\frac{\kappa_1\phi(0)}{2(T_m- \varepsilon_0)}\!+\!\frac{1}{2} \!+\!\frac{8\phi(0)\kappa_1|{\mathcal Z}(0)|^2}{(T_m- \varepsilon_0)^2}   e^{-\frac{2\varepsilon}{3} t}\!+\! \sqrt{2}  |{\mathbf z}_c(0)| \Big) {\mathcal T}^2\\
&\ \ \ \ + \Big( 2\frac{\phi(0)\kappa_1}{T_m- \varepsilon_0} +\frac{\kappa_2\zeta(3\sqrt{2} \varepsilon_0)}{2n(T_M+ \varepsilon_0)^2} \Big) {\mathcal V}^4+ {\mathcal X}^2 {\mathcal V}^2+ \sqrt{2}  |{\mathbf z}_c(0)|  |{\mathcal Z}|^2 \\
   &\le-\left(\frac{2\kappa_2\zeta(3\sqrt{2} \varepsilon_0)}{(T_M+ \varepsilon_0)^2}-\frac{\kappa_1\phi(0)}{T_m- \varepsilon_0}-1-\frac{16\phi(0)\kappa_1}{(T_m- \varepsilon_0)^2}  |{\mathcal Z}(0)|^2 - 2\sqrt{2 } |{\mathbf z}_c(0)| \right) {\mathcal T}^2\\
   &\ \ \ \ + \!16\! \left(\!\frac{2\phi(0)\kappa_1}{T_m- \varepsilon_0}\!+\frac{\kappa_2\zeta(3\sqrt{2} \varepsilon_0)}{2n(T_M+ \varepsilon_0)^2}\!+\!1\!\right)\!\! |{\mathcal Z}(0)|^4 e^{-\frac{4\varepsilon}{3} t}\!+\!4\sqrt{2}  |{\mathbf z}_c(0)| |{\mathcal Z}(0)|^2 e^{-\frac{2\varepsilon}{3} t}\\
   &\leq -\left(\frac{2\kappa_2\zeta(3\sqrt{2} \varepsilon_0)}{(T_M+ \varepsilon_0)^2}-\frac{\kappa_1\phi(0)}{T_m- \varepsilon_0}-1-\frac{16\phi(0)\kappa_1}{(T_m- \varepsilon_0)^2}  |{\mathcal Z}(0)|^2 - 2\sqrt{2}  |{\mathbf z}_c(0)| \right) {\mathcal T}^2\\
  &\ \ \ \ +\!\left[ 16\!\left(\frac{2\phi(0)\kappa_1}{T_m- \varepsilon_0}\!+\!\frac{\kappa_2\zeta(3\sqrt{2} \varepsilon_0)}{2n(T_M+ \varepsilon_0)^2}\!+\!1\!\right) \!\!|{\mathcal Z}(0)|^4
 +4\sqrt{2}  |{\mathbf z}_c(0)|  |{\mathcal Z}(0)|^2 \right] \!e^{-\frac{2\varepsilon}{3} t} \\
  &=  -A_1 {\mathcal T}^2+A_2e^{-\frac{2\varepsilon}{3} t}.
  \end{aligned}
    \end{align}
 Finally, we apply Lemma \ref{L2.2}  for \eqref{D-15} to see
  \[ \mathcal{T}^2(t)\le \mathcal{T}^2(0)e^{-A_1 t}+\frac{A_2}{A_1-\frac{2}{3} \varepsilon }(e^{-\frac{2\varepsilon}{3} t}-e^{-A_1 t}),\quad \mbox{for $0\le t< \tau.$}\]
\end{proof}
\begin{remark}
Note that if we assume a rough bound estimate of temperature fluctuations around time-dependent temperature $T^{\infty}(t)$, then we can obtain exponential decay estimate toward $T^{\infty}(t)$. This bootstrapping argument will be used to derive the thermo-mechanical flocking in the whole time interval $(0, \infty)$ in next section. 
\end{remark}

\section{Emergence of a periodically rotating one-point cluster} \label{sec:5}
\setcounter{equation}{0}
In this section, we provide our main result on the formation of a periodically rotating one-point cluster by extending a local result of Theorem \ref{T4.1} in the small-time interval $[0, \tau)$  to the whole line. For this, we verify  that the a priori assumption \eqref{D-0-0} on the positivity of temperatures, i.e., \eqref{A-2} holds in a whole time interval using the continuous induction and decay estimates of fluctuations in Theorem \ref{T4.1}.  We are now ready to present our main result on the emergence to asymptotic flocking. 
\begin{theorem} \label{T5.1}

Suppose that the communication weights and initial data satisfy the following relations:
\begin{enumerate}
\item
The communication weight functions $\Phi$ and $\zeta$ satisfy (\ref{CA}) and (\ref{comm}).
\item
There exists $\varepsilon_0>0$ such that the initial data satisfy 
 \begin{align} 
 \begin{aligned} \label{E-1}
      \quad T_m \ge \delta\varepsilon_0 \quad ( \forall \delta>3) , \quad 
         {\mathcal X}(0)\le\varepsilon_0, \quad  \mathcal{V}(0) \le \varepsilon_0,  \quad
    \varepsilon_0 ^2
> \mathcal{T}^2(0)+\frac{A_2}{|A_1-\frac{2}{3} \varepsilon|}.
\end{aligned}
\end{align}
\item
Moreover, the parameters $\varepsilon$  and  $\varepsilon_0$  in \eqref{E-1} satisfy
 \begin{align*} 
 \begin{aligned}
 &0 <\varepsilon ( \ne \frac{3}{2}A_1)  \le \frac{1}{2}, \quad \phi(3\sqrt{2} \varepsilon_0)>\max\left\{\frac{\varepsilon+\varepsilon\gamma}{\lambda}, \frac{2(T_M+\varepsilon_0)\phi(0)}{(\delta-1)(T_m- \varepsilon_0)}\right\}, \\
&\lambda:= \frac{\kappa_1}{ 2(T_M + \varepsilon_0)}, \quad \gamma := \frac{3(\kappa_1\phi(0))^2}{(T_m - \varepsilon_0)^2} +1,
 \end{aligned}
\end{align*}

\end{enumerate} 
 and let $\{ (\mathbf{x}_{\alpha}, \mathbf{v}_{\alpha}, T_{\alpha}) \}$ be a solution to system  \eqref{TCSH}.
Then, the following assertions hold:
\begin{enumerate}
\item The  temperature fluctuations are uniformly bounded:
\begin{equation} \label{E-2}
\sup_{0 \leq t < \infty} |T_\alpha(t) - T^{\infty}(t) |\le \varepsilon_0,
\end{equation}
 or the temperatures are uniformly bounded and away from zero: %for all $t \in [0, \infty)$, 
\begin{equation*} \label{Ap-nm-2}
0\ll T_m -\varepsilon_0\le T_\alpha(t) \le T_M+ \varepsilon_0, \quad t\in[0,\infty), \quad \alpha=1,\cdots n.
\end{equation*}

\vspace{0.2cm}

\item
The thermo-mecanical flocking occurs exponentially fast: for $t \in [0, \infty)$, 
\begin{align}
 \begin{aligned} \label{E-3}
& \sum\limits_{\alpha = 1}^{n} \Big( |\mathbf{x}_\alpha(t) - \mathbf{x}_c(t) |^2  + | \mathbf{v}_\alpha(t) - \mathbf{v}_c(t) |^2 \Big) \\
& \hspace{0.5cm} \leq 4  \sum\limits_{\alpha = 1}^{n} \Big( |\mathbf{x}_\alpha(0) - \mathbf{x}_c(0) |^2  + | \mathbf{v}_\alpha(0) - \mathbf{v}_c(0) |^2 \Big) e^{-\frac{2\varepsilon}{3} t}. \\
& \sum\limits_{\alpha = 1}^{n} |T_\alpha(t) -T^{\infty}(t)|^2 \leq \sum\limits_{\alpha = 1}^{n} |T_\alpha(0) -T^{\infty}(0)|^2  e^{-A_1 t}+\frac{A_2}{A_1-\frac{2}{3}\varepsilon} \Big(e^{-\frac{2\varepsilon}{3} t}-e^{-A_1 t} \Big).
 \end{aligned}
 \end{align}
\end{enumerate}
\end{theorem}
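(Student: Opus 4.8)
The plan is to remove the a priori temperature condition \eqref{D-0-0}$_4$ by a continuity (continuous induction) argument, using the strict initial smallness assumption $\varepsilon_0^2 > \mathcal{T}^2(0)+\frac{A_2}{|A_1-\frac23\varepsilon|}$ in \eqref{E-1} as the seed that lets the bootstrap close. The key point is that Theorem \ref{T4.1} already delivers the desired decay estimates \eqref{E-3} on any interval $[0,\tau)$ on which the a priori bound $\sup_\alpha\sup_{t<\tau}|T_\alpha-T^{\infty}|\le\varepsilon_0$ holds; hence it suffices to prove that this bound is self-propagating and therefore valid for all time. First I would record the elementary observation that $\max_\alpha|\hat{T}_\alpha(t)|\le\mathcal{T}(t)$, so that controlling the single scalar $\mathcal{T}$ by $\varepsilon_0$ automatically enforces \eqref{D-0-0}$_4$.

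Next I introduce the maximal time up to which the bound persists,
\[
\tau^* := \sup\{\tau>0 : \mathcal{T}(t)\le\varepsilon_0 \ \text{for all}\ t\in[0,\tau)\},
\]
and claim $\tau^*=\infty$. Since $\mathcal{T}^2(0)<\varepsilon_0^2$ by \eqref{E-1} and $t\mapsto\mathcal{T}(t)$ is continuous on the interval of existence (global well-posedness being guaranteed by the Cauchy--Lipschitz theory as noted in the introduction), we have $\tau^*>0$. The core of the proof is then the bootstrap: assume for contradiction $\tau^*<\infty$. On $[0,\tau^*)$ the a priori condition holds, so all hypotheses of Theorem \ref{T4.1} (equivalently Proposition \ref{L4.1} together with Proposition \ref{P4.2}) are in force, and Proposition \ref{P4.2} yields
\[
\mathcal{T}^2(t)\le \mathcal{T}^2(0)e^{-A_1 t}+\frac{A_2}{A_1-\frac23\varepsilon}\left(e^{-\frac{2\varepsilon}{3}t}-e^{-A_1 t}\right), \quad t\in[0,\tau^*).
\]

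The decisive step is to bound this right-hand side uniformly in $t$ by $\mathcal{T}^2(0)+\frac{A_2}{|A_1-\frac23\varepsilon|}$, irrespective of the sign of $A_1-\frac23\varepsilon$. When $A_1>\frac23\varepsilon$, the bracketed difference is nonnegative and bounded by $e^{-\frac{2\varepsilon}{3}t}\le 1$; when $A_1<\frac23\varepsilon$, I rewrite the second term as $\frac{A_2}{\frac23\varepsilon-A_1}\left(e^{-A_1 t}-e^{-\frac{2\varepsilon}{3}t}\right)$, which is again nonnegative and bounded by $e^{-A_1 t}\le 1$ (here $A_1>0$ is used, as assumed after \eqref{A1A2}). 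The hypothesis $\varepsilon\ne\frac32 A_1$ excludes the degenerate equality $A_1=\frac23\varepsilon$ where the denominator vanishes. In either case $\mathcal{T}^2(t)\le \mathcal{T}^2(0)+\frac{A_2}{|A_1-\frac23\varepsilon|}<\varepsilon_0^2$ on $[0,\tau^*)$, and by continuity the same \emph{strict} inequality holds at $t=\tau^*$. Thus $\mathcal{T}(\tau^*)<\varepsilon_0$, so $\mathcal{T}<\varepsilon_0$ persists on a slightly larger interval $[0,\tau^*+\eta)$, contradicting the maximality of $\tau^*$. Therefore $\tau^*=\infty$.

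Finally, with $\tau^*=\infty$ the a priori condition \eqref{D-0-0} is valid on all of $[0,\infty)$, so Theorem \ref{T4.1} delivers the decay estimates \eqref{E-3} for every $t\ge 0$. The uniform bound \eqref{E-2} is exactly $\mathcal{T}(t)\le\varepsilon_0$, and the two-sided positivity $0\ll T_m-\varepsilon_0\le T_\alpha(t)\le T_M+\varepsilon_0$ follows by writing $T_\alpha=T^{\infty}+\hat{T}_\alpha$ and invoking $T_m\le T^{\infty}(t)\le T_M$ from \eqref{A-0-0}, exactly as in the Remark after Proposition \ref{P3.1}. I expect the main obstacle to be the uniform-in-time estimate of the Gr\"onwall right-hand side with the correct treatment of both signs of $A_1-\frac23\varepsilon$; the remainder is the standard open/closed bookkeeping of the continuity argument.
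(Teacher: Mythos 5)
Your proposal is correct and follows essentially the same route as the paper: a continuity/bootstrap argument that propagates the a priori temperature bound using the uniform-in-time estimate from Proposition \ref{P4.2} together with the strict gap $\varepsilon_0^2>\mathcal{T}^2(0)+\frac{A_2}{|A_1-\frac{2}{3}\varepsilon|}$ in \eqref{E-1}, then invokes Theorem \ref{T4.1} on $[0,\infty)$. The only cosmetic difference is that you run the argument on the single scalar $\mathcal{T}(t)\le\varepsilon_0$ (which dominates $\max_\alpha|\hat T_\alpha|$) and contradict maximality directly, whereas the paper tracks the individual bounds $|T_\alpha-T^\infty|\le\varepsilon_0$ and derives the contradiction at the first touching time; your explicit two-sign treatment of $A_1-\frac{2}{3}\varepsilon$ is a slightly more careful rendering of the same bound.
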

\begin{proof} For the desired estimates \eqref{E-3}, it follows from Theorem \ref{T4.1} that it suffices to check the a priori condition  in \eqref{D-0-0} holds for $\tau = \infty$.  \newline

For this, define
\[\mathcal{S}:= \Big \{t>0:T^{\infty}(s) - \varepsilon_0\le T_\alpha(s)\le T^{\infty}(s) + \varepsilon_0, \quad s \in [0, t), \quad \alpha=1,\cdots,n \Big \}.\]
Then, since 
\[ T_m- \varepsilon_0 \leq T^{\infty}(0) -\varepsilon_0 < T_{\alpha}(0) < T^{\infty}(0) + \varepsilon_0 \leq T_M + \varepsilon_0 \]
and by the continuity, there exists $\tau^{\prime} > 0$ such that 
\[  T^{\infty}(s) - \varepsilon_0\le T_\alpha(s)\le T^{\infty}(s) + \varepsilon_0, \quad s \in [0, \tau^{\prime}).   \]
Hence 
\[ \tau^{\prime} \in {\mathcal S}, \quad \mbox{i.e.,} \quad {\mathcal S} \not = \emptyset. \]
Now we claim:
\[ \tilde{t}^* :=  \sup {\mathcal S} = \infty. \]
Suppose not, i.e., ${\tilde t}^*<\infty$. Then, there exists at least one $\alpha$ such that
\[T_\alpha({\tilde t}^*)=T^{\infty}({\tilde t}^*) + \varepsilon_0 \quad \mbox{or} \quad T_\alpha({\tilde t}^*)=T^{\infty}({\tilde t}^*) - \varepsilon_0.\]
In what follows, we will show that the above two cases lead to contradictions, and we conclude $ \tilde{t}^* = \infty$ and the desired estimates follow from Theorem \ref{T4.1}.

\vspace{0.5cm}

\noindent $\bullet$ (Case A): Suppose there exists $\alpha \in \{1, \cdots, n \}$ such that 
\[ T_\alpha({\tilde t}^*)=T^{\infty}({\tilde t}^*)  + \varepsilon_0. \]
This and $\eqref{C-1}_3$ yield
\begin{equation} \label{E-4}
\varepsilon_0^2 = |T_\alpha({\tilde t}^*)-T^\infty({\tilde t}^*)|^2 \leq \mathcal{T}^2({\tilde t}^*).
\end{equation}
On the other hand, we use Proposition \ref{P4.2} to obtain an upper bound for $\mathcal{T}^2$:
\begin{equation}\label{E-5}
\mathcal{T}^2({\tilde t}^*) \le \mathcal{T}^2(0) e^{-A_1 {\tilde t}^*}+\frac{A_2}{A_1-\frac{2}{3}\varepsilon}(e^{-\frac{2\varepsilon }{3}{\tilde t}^*}-e^{-A_1 {\tilde t}^*}) \le \mathcal{T}^2(0)+ \frac{A_2}{|A_1-\frac{2}{3}\varepsilon|}.
\end{equation}
Now, it follows from \eqref{E-4} and \eqref{E-5} that 
\[ \varepsilon_0 ^2 \le \mathcal{T}^2(0)+\frac{A_2}{|A_1-\frac{2}{3}\varepsilon|}. \]
This is contradictory to the last relation in \eqref{E-1}.  \newline

\noindent $\bullet$ (Case B):~Suppose there exists $\alpha \in \{1, \cdots, n \}$ such that 
\[ T_\alpha({\tilde t}^*)=T^{\infty}({\tilde t}^*)- \varepsilon_0 \]
which leads to the same estimate \eqref{E-4}. Then, we can repeat the same argument in Case A to get a contradiction to \eqref{E-1}. Finally, it follows from Case A and Case B that we derive a contradiction from the hypothesis that ${\tilde t}^*$ is finite. Therefore, we have
\[ \tilde{t}^* = \infty   \]
and the a priori condition on the positivity of temperatures is valid in a whole time interval as in \eqref{E-2}:
\[   %  {\color{red}{T_m \ge \beta\varepsilon_0,}}  \quad
 \sup_{0 \leq t < \infty}  |{\hat T}_{\alpha}(t)| \leq \varepsilon_0. \]
Then the thermo-mechanical estimates \eqref{D-0-1} in Theorem \ref{T4.1} hold for $\tau=\infty$. 
\end{proof}

\begin{remark} \label{R5.5}
   We can relax the conditions (2)-(3)  in Theorem  \ref{T5.1} to more applicable  conditions:
   there exist $\varepsilon>0$ and $\varepsilon_0>0$ such that the initial data satisfy 
   \begin{align*}   
 \begin{aligned}
&(2'): {\mathcal X}(0)\le\varepsilon_0, \quad  \mathcal{V}(0) \le \varepsilon_0, \quad  \varepsilon_0 ^2
> \mathcal{T}^2(0)+\frac{A_2}{|A_1-\frac{2}{3} \varepsilon|},\\
& (3'):  -2\lambda \phi(3\sqrt{2} \varepsilon_0)+\varepsilon\gamma+\frac{\kappa_1\varepsilon_0 \phi(0)}{ (T_m - \varepsilon_0)^2 }\le-\varepsilon.
 \end{aligned}
\end{align*}

If we choose ${\mathcal X}(0)\le\varepsilon_0$, $\mathcal{V}(0) \le \varepsilon_0$, $\mathcal{T}^2(0) \le \frac{1}{2}\varepsilon_0^2$,  $\kappa_2\zeta(3\sqrt{2} \varepsilon_0)\gg\varepsilon_0^2(T_M+ \varepsilon_0)^2\gg \kappa_1$  and  $\ \frac{(T_M-\varepsilon_0)^2}{T_M+\varepsilon_0}>3\sqrt{2} \varepsilon_0^2$, then $(2')$ and  $(3')$   are true. 
 \end{remark}
\begin{remark} \label{R5.1}
In a recent work \cite{ST}, Shu and Tadmor studied emergent dynamics for the hydrodynamic Cucker-Smale model in an external field:
\[
\begin{cases}
\displaystyle \partial_t \rho + \nabla_x (\rho {\mathbf u}) = 0, \\
\displaystyle \partial_t {\mathbf u} + {\mathbf u}  \cdot \nabla_{{\mathbf x}} {\mathbf u}  = \int \phi(|{\mathbf x} - {\mathbf y}|) ({\mathbf u}({\mathbf y}, t) - {\mathbf u}({\mathbf x}, t))\rho({\mathbf y}, t) d{\mathbf y} - \nabla U({\mathbf x}),
\end{cases}
\]
and obtained several emergent estimates using an energy estimate and Lagrangian approach based on particle trajectories.
\end{remark}

%%%%%%%%%%%%%%%%%%%%%%%%%%%%%%%%%%%
%
%
%
%%%%%%%%%%%%%%%%%%%%%%%%%%%%%%%%%%%
\section{Numerical simulation}  \label{sec:6}
\setcounter{equation}{0} 
In this section, we provide numerical examples for the thermo-mechanical  flocking. 
For the simulation of system  \eqref{TCSH}, we use the forth-order Runge-Kutta method with a  time step $\Delta t=0.01$ in the two dimensional Euclidean space.
%We assume that the temperature of  particles (for example birds) is around $320$ Kelvins,  the altitude height of particles varies from  $20$ meters to $21$ meters and the relative horizontal distance varies from  $1$ meter  to $2$ meters, the velocity of particles is around $7$ meters per second. For the simulation, we also assume  that  the specific heat capacity is $O(1) 10^3 J kg^{-1} K^{-1}$ and  the weight of one particle is $O(1) 10^{-1}kg$ for simplicity. 
%so  that the the specific internal energy is equal to 100 the  temperature:$(1:100)$,   
%After a rescaling $(1:100)$, 
We set $n=100$ and pick initial data as in Figure \ref{fig1}. Precisely we take $\mathbf x_\alpha \in (0.32,0.35)\times(0.2,0.24)$, $\mathbf v_\alpha\in (-0.3,-0.29)\times(0.05,0.06)$, and $T_\alpha\in (10.8,10.9)$ for $\alpha =1,\ldots, n$ which are picked randomly among the rational numbers on each associated interval.

We choose the coupling strengths and the communication weight functions as follows:
\[ \kappa_1=1,\quad\kappa_2=100,\quad\phi(r)=\frac{1}{\sqrt{1+r^2}},\quad \zeta(r)=\frac{40}{\sqrt{1+r^2}}. \]
 From these, we compute some values:
\[ \mathcal{X}(0)\approx0.1419,\quad\mathcal{V}(0)\approx0.5470,\quad \mathcal{T}(0)\approx0.2722,
\quad T_m\approx 10.6445, \quad T_M\approx 10.8955. \]
%The center-of-mass functions, $\mathbf{x}_c$, $\mathbf{v}_c$, and $T^\infty$, are described in Figure \ref{fig2}. In particular, we can see that positions and velocities are  rotating over time.
\begin{figure}[h!] \centering
\includegraphics[width=0.9\textwidth]{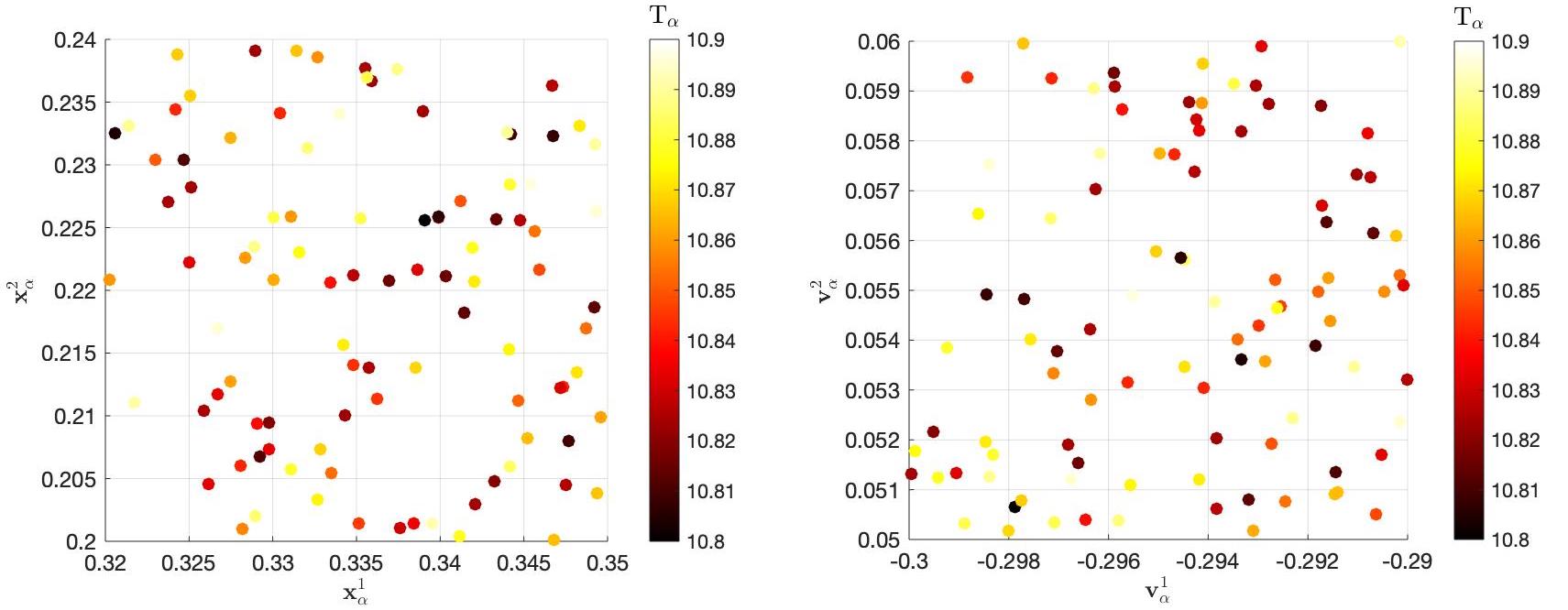}
\caption{Initial positions(left), velocities(right) and temperatures(color) with $n=100$.} \label{fig1}
\end{figure}
%\begin{figure}[h!] \centering
%\includegraphics[width=1.0\textwidth]{fig2.jpg}
%\caption{The center-of-mass functions $\mathbf{x}_c$(left), $\mathbf{v}_c$(mid), and time-dependent temperature $T^\infty$(right) whose formula is given in Lemma \ref{L2.1}.} \label{fig2}
%\end{figure}

\noindent Now with the  setting $\varepsilon_0=0.76,\,\varepsilon=0.003$, 
these parameters  satisfy the conditions  $(2)$ and $(3)$ in Theorem \ref{T5.1}. 
In Figure \ref{fig3}, we can see the flocking phenomena of $\mathbf{x}_\alpha,\,\mathbf{v}_\alpha$ and $\mathbf{T}_\alpha$. Indeed these configuration which appear to be periodic, approximate to $\mathbf{x}_c$,  $\mathbf{v}_c$, and $T^\infty$ respectively which are periodic (Lemma \ref{L2.1}).
In Figure \ref{fig5}, we can see the exponential decay of $\ell^2$ fluctuations $\mathcal{X}$, $ \mathcal{V}$ and $\mathcal T$. From the decay results, we can assert that $\mathbf{x}_\alpha,\,\mathbf{v}_\alpha$ and $\mathbf{T}_\alpha$ approximate $\mathbf{x}_c$, $\mathbf{v}_c$, and $\mathbf{T}^\infty$ respectively.
\begin{figure}[h!] \centering
\includegraphics[width=1.0\textwidth]{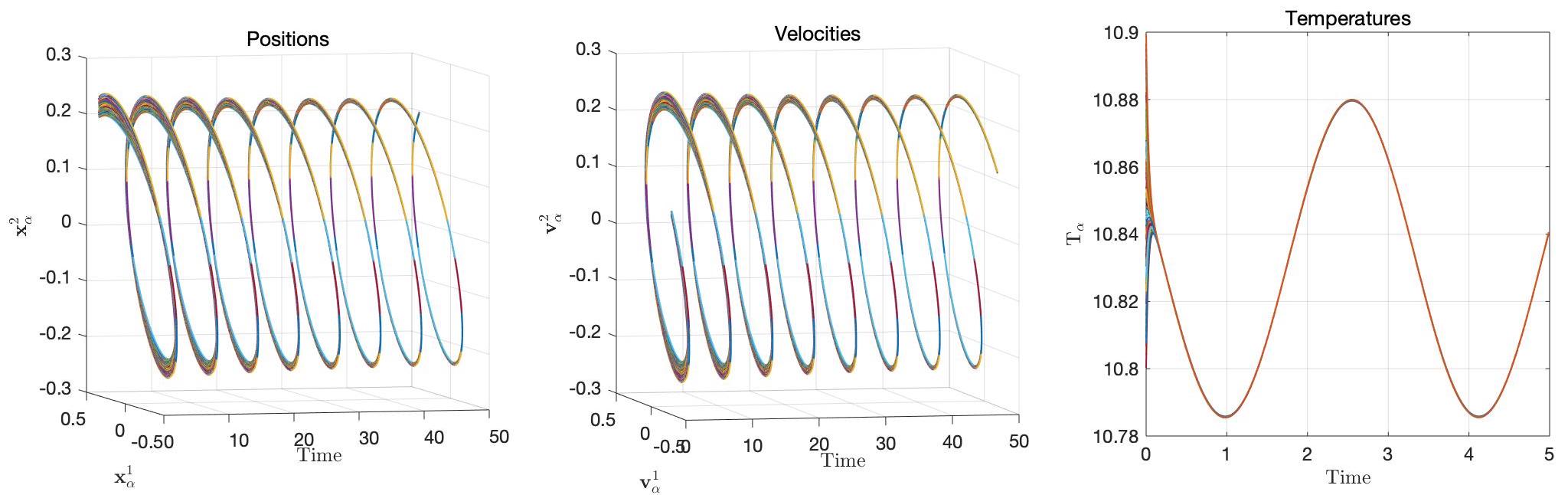}\caption{Dynamics of positions $\mathbf x_\alpha$(left), velocities $\mathbf v_\alpha$(mid), and temperatures $T_\alpha$(right). Each line with a color shows one particle. $\mathbf{x}_\alpha^i,\,\mathbf{v}_\alpha^i$ mean $i$-th arguments of $\mathbf{x}_\alpha,\,\mathbf{v}_\alpha$ respectively.}\label{fig3}
\end{figure}
%\begin{figure}[h!] \centering
%\includegraphics[width=1.0\textwidth]{fig4.jpg} \caption{Overlapped image of Figure \ref{fig2} under dynamics described in Figure \ref{fig3}. Each color describes the behavior of one particle.} \label{fig4}
%\end{figure}
\begin{figure}[h!] \centering
\includegraphics[width=1.0\textwidth]{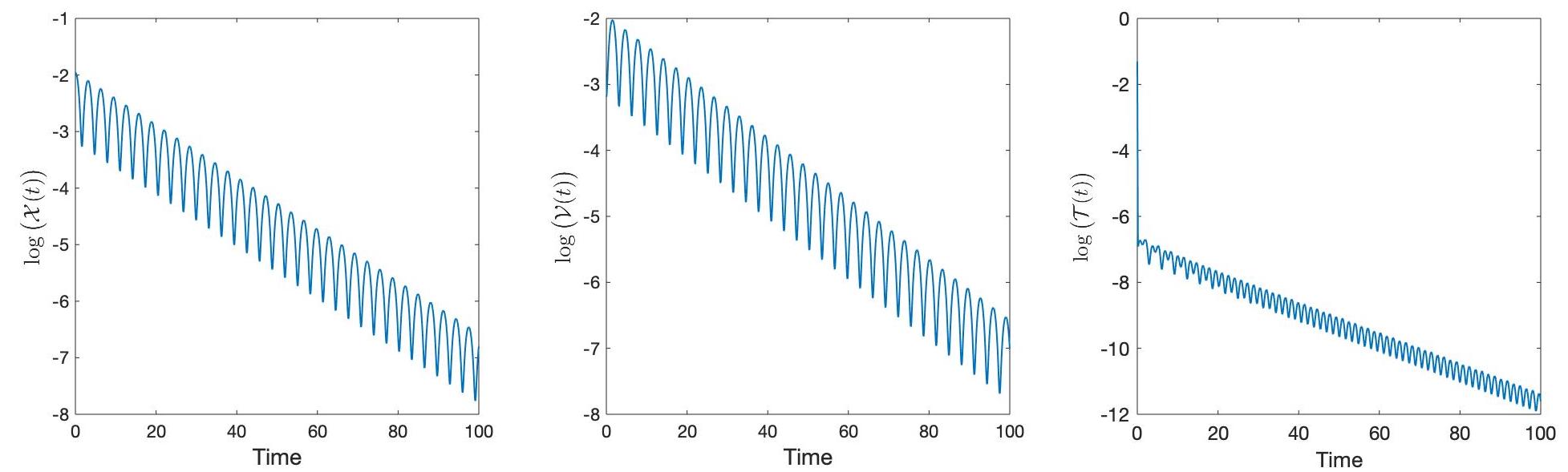} \caption{Decaying results for $\ell^2$ fluctuations $ \log\big(\mathcal{X}(t)\big)$, $ \log\big(\mathcal{V}(t)\big)$ and $\log\big(\mathcal T(t)\big)$.} \label{fig5}
\end{figure}

\section{ Conclusion} \label{sec:7}
  \setcounter{equation}{0}
   In this paper, we  studied emergent dynamics of the thermodynamic Cucker-Smale  model in a harmonic potential field, and provided asymptotic formation of periodically rotating one-point cluster which cannot be seen from the Cucker-Smale model in the absence of a harmonic potential field.  For the emergent dynamics, we need well-prepared initial data which are confined in a certain range  of the  state space. To guarantee the positivity of temperatures in a whole time interval, we first make an ansatz on the temperatures in a short-time interval to make sure the existence of  the solution for the system, and then obtain the  exponential decay for the fluctuations of  position and velocity from the dissipative differential inequality in the same short-time.  Using this result, one can deduce that  fluctuations of temperature decay exponentially  which improves the ansatz. By the continuity argument, we derived the formation of periodically rotating one-point cluster exponentially fast. Of course, our analytical framework is only a sufficient one, hence once the initial data  do not satisfy conditions in our proposed framework, then our results cannot say anything definite. As in the Cucker-Smale model, multi-clusters can emerge from the given initial data which do not satisfy our proposed framework. We leave this interesting issue for a future work. 
  
    \bibliography{references}

     \end{document}